\theoremstyle{plain}
\newtheorem{thm}{Theorem}[section]
\newtheorem{cor}[thm]{Corollary}
\newtheorem{lem}[thm]{Lemma}
\newtheorem{prop}[thm]{Proposition}
\theoremstyle{definition}
\theoremstyle{remark}
\newtheorem{rem}[thm]{Remark}
\numberwithin{equation}{section}
\newcommand{\average}{{\mathchoice {\kern1ex\vcenter{\hrule height.4pt
width 6pt depth0pt} \kern-9.7pt} {\kern1ex\vcenter{\hrule
height.4pt width 4.3pt depth0pt} \kern-7pt} {} {} }}
\def\R{\mathbb{R}}
\def\div{\text{div}}
\begin{document}

\title{Sobolev and isoperimetric inequalities with monomial weights}

\author{Xavier Cabr\'e}

\address{ICREA and Universitat Polit\`ecnica de Catalunya,
Departament de Matem\`{a}tica  Aplicada I, Diagonal 647, 08028 Barcelona, Spain}

\email{xavier.cabre@upc.edu}


\author{Xavier Ros-Oton}

\address{Universitat Polit\`ecnica de Catalunya, Departament de Matem\`{a}tica  Aplicada I, Diagonal 647, 08028 Barcelona, Spain}
\email{xavier.ros.oton@upc.edu}

\keywords{Weighted Sobolev inequality, isoperimetric inequalities with a density, monomial weight, axial symmetries.}

\begin{abstract} We consider the monomial weight $|x_1|^{A_1}\cdots|x_n|^{A_n}$ in $\R^n$, where $A_i\geq0$ is a real number for each $i=1,...,n$,
and establish Sobolev, isoperimetric, Morrey, and Trudinger inequalities involving this weight.
They are the analogue of the classical ones with the Lebesgue measure $dx$ replaced by $|x_1|^{A_1}\cdots|x_n|^{A_n}dx$, and they contain the best or critical exponent (which depends on $A_1$, ..., $A_n$).
More importantly, for the Sobolev and isoperimetric inequalities, we obtain the best constant and extremal functions.

When $A_i$ are nonnegative \textit{integers}, these inequalities are exactly the classical ones in the Euclidean space $\R^D$
(with no weight) when written for axially symmetric functions and domains
in $\R^D=\R^{A_1+1}\times\cdots\times\R^{A_n+1}$.
\end{abstract}

\maketitle

\section{Introduction and results} \label{intro}

In this paper we establish Sobolev, Morrey, Trudinger, and isoperimetric inequalities in $\mathbb R^n$ with the weight $x^A$,
where $A=(A_1,...,A_n)$ and
\begin{equation}\label{monomialweight}x^A:=|x_1|^{A_1}\cdots|x_n|^{A_n},\qquad A_1\geq0,\ ...,\ A_n\geq0.
\end{equation}
They were announced in our previous article \cite{CR}. In fact,
their interest and applications arose in \cite{CR}, where we had $n=2$ in \eqref{monomialweight}.
In that paper we studied the regularity of stable solutions to reaction-diffusion problems
in bounded domains of double revolution in $\R^N$.
That is, domains of $\R^N$ which are invariant under rotations of the first $m$ variables and of the last $N-m$ variables, i.e.,
\[\overline\Omega=\{(x^1,x^2)\in\mathbb R^m\times\mathbb R^{N-m}:(s=|x^1|,t=|x^2|)\in\overline\Omega_2\},\]
where $\Omega_2\subset(\mathbb R_+)^2$ is a bounded domain.

The first step towards the results in \cite{CR} consisted of obtaining bounds for some integrals of the form
\[\int_{\Omega_2} \left\{s^{-\alpha} u_s^2+t^{-\beta}u_t^2\right\}ds\, dt,\]
where $u$ is any stable solution and $s$ and $t$ are, as above, the two radial coordinates describing $\Omega$.
Then, from this bound we needed to deduce that $u\in L^q(\Omega)$, with $q$ as large as possible.
After a change of variables of the form $s=\sigma^{\gamma_1}$, $t=\tau^{\gamma_2}$, what we needed to establish is the following Sobolev inequality.
Given $a>-1$ and $b>-1$, find the greatest exponent $q$ for which
\[\left(\int_{\widetilde\Omega_2}\sigma^a\tau^b|u|^{q}d\sigma d\tau\right)^{1/q}\leq C\left(\int_{\widetilde\Omega_2}\sigma^a\tau^b|\nabla u|^2d\sigma d\tau\right)^{1/2}\]
holds for all smooth functions $u$ vanishing on $\partial \widetilde\Omega_2\cap(\R_+)^2$, where $\widetilde\Omega_2=\{(\sigma,\tau)\in (\R_+)^2:(s=\sigma^{\gamma_1},t=\tau^{\gamma_2})\in\Omega_2\}$ is an arbitrary bounded domain of $(\R_+)^2$.

On the one hand, we obtained that $u\in L^\infty(\widetilde \Omega_2)$ whenever the right hand side is finite for some $a$, $b$ with $a+b<0$.
On the other hand, in case $a+b>0$ we established the following.

Throughout the paper, $C^1_c(\R^n)$ denotes the space of $C^1$ functions with compact support in $\R^n$.

\begin{prop}[\cite{CR}] Let $a$ and $b$ be real numbers such that
\[a>-1,\ \ b>-1,\ \ \mbox{and}\ \ a+b>0.\]
Let $u$ be a nonnegative $C^1_c(\R^2)$ function such that
\begin{equation}\label{assumption}
u_\sigma\leq0\ \ \textrm{and}\ \ u_\tau\leq0\ \ \textrm{in}\ \ \{\sigma>0, \tau>0\}.
\end{equation}
with strict inequalities in the set $\{u>0\}$.
Then, there exists a constant $C$, depending only on $a$ and $b$, such that
\begin{equation}\label{sobsigmatau}
\left(\int_{\{\sigma>0,\,\tau>0\}}\sigma^a\tau^b|u|^{2_*}d\sigma d\tau\right)^{1/2_*}\leq C\left(\int_{\{\sigma>0,\, \tau>0\}}\sigma^a\tau^b|\nabla u|^2d\sigma d\tau\right)^{1/2},
\end{equation}
where $2_*=\frac{2D}{D-2}$ and $D=a+b+2$.
\end{prop}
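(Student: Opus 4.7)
The plan is to combine the coarea formula with a weighted isoperimetric inequality, both applied to the superlevel sets of $u$, which by \eqref{assumption} are \emph{downsets} of the open quadrant. For $t>0$ let $\Omega_t:=\{u>t\}\cap\{\sigma,\tau>0\}$ and put
\[
V(t):=\int_{\Omega_t}\sigma^a\tau^b\,d\sigma\,d\tau,\qquad P(t):=\int_{\partial\Omega_t\cap(\R_+)^2}\sigma^a\tau^b\,d\mathcal H^1.
\]
The strict inequalities in \eqref{assumption} give $|\nabla u|>0$ on $\{u=t\}$ for a.e.\ $t$; the coarea formula then yields $-V'(t)=\int_{\{u=t\}}\sigma^a\tau^b|\nabla u|^{-1}\,d\mathcal H^1$ together with $\int\sigma^a\tau^b|\nabla u|^2=\int_0^\infty\!\int_{\{u=t\}}\sigma^a\tau^b|\nabla u|\,d\mathcal H^1\,dt$, and Cauchy--Schwarz on each level set produces $P(t)^2\le(-V'(t))\int_{\{u=t\}}\sigma^a\tau^b|\nabla u|\,d\mathcal H^1$.

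The central ingredient is a weighted isoperimetric inequality for downsets in $(\R_+)^2$:
\[
P(t)\ge c(a,b)\,V(t)^{(D-1)/D},\qquad D:=a+b+2.
\]
Since $\Omega_t$ is a downset, its boundary inside the open quadrant is the graph of a nonincreasing function $\phi_t$; writing both $V(t)$ and $P(t)$ as one-dimensional integrals of $\phi_t$ reduces the inequality to a calculus-of-variations problem on decreasing profiles, with the quarter-discs $\sigma^2+\tau^2<R^2$ as extremals.

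Combining the two steps gives the differential inequality $c^2V(t)^{2(D-1)/D}\le(-V'(t))\int_{\{u=t\}}\sigma^a\tau^b|\nabla u|\,d\mathcal H^1$. Integrating in $t$ and passing to the one-dimensional decreasing rearrangement $v(s):=\inf\{t:V(t)\le s\}$, which is equimeasurable with $u$ under the weighted measure so that $\int u^{2_*}\sigma^a\tau^b=\int_0^M v^{2_*}\,ds$, the desired Sobolev estimate reduces to the one-dimensional weighted inequality
\[
\Big(\int_0^M v^{2_*}\,ds\Big)^{2/2_*}\le C\int_0^M s^{2(D-1)/D}\,v'(s)^2\,ds,
\]
which is a classical Bliss/Hardy inequality, equivalent to the radial Sobolev inequality in ``dimension $D$'' and valid for any real $D>2$ (here guaranteed by $a+b>0$).

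The main obstacle is the isoperimetric inequality in the middle step. The monotonicity hypothesis \eqref{assumption} is precisely what makes it tractable at the level of \cite{CR}: because the level sets $\Omega_t$ are downsets, their boundaries are graphs of decreasing functions and the two-dimensional isoperimetric problem collapses to a one-dimensional variational computation. Without this hypothesis one is forced into the full weighted isoperimetric inequality on $(\R_+)^n$, which is one of the main results of the present paper.
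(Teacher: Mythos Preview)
The proposition is quoted from \cite{CR} rather than proved here, but the closest analogue in the present paper is the Proposition in Section~\ref{sec3}, whose argument is quite different from yours. There one uses no coarea formula, no isoperimetric inequality, and no rearrangement: integration by parts combined with $u_\sigma,u_\tau\le 0$ gives
\[
\int_{(\R_+)^2}\sigma^a\tau^b\,u\Bigl(\frac{a}{\sigma}+\frac{b}{\tau}\Bigr)\,d\sigma\,d\tau=\int_{(\R_+)^2}\sigma^a\tau^b\bigl(|u_\sigma|+|u_\tau|\bigr)\,d\sigma\,d\tau,
\]
and a second use of monotonicity yields the pointwise bound $u(\sigma,\tau)^{1/(D-1)}\le\lambda\bigl(\sigma^{-1}+\tau^{-1}\bigr)$ for a scale $\lambda$ determined by $\|u\|_{L^{D/(D-1)}}$. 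Multiplying by $u$ and combining with the identity above gives the $p=1$ inequality directly; the case $p=2$ then follows by the standard H\"older trick applied to $|u|^{\gamma}$. Your route---coarea, Cauchy--Schwarz on level sets, a downset isoperimetric inequality, then the one-dimensional Bliss/Talenti inequality---is instead the scheme of the paper's \emph{main} proof of Theorem~\ref{sob} together with the rearrangement machinery of Section~\ref{sec4}. It is a correct outline and, with the sharp isoperimetric constant, would even yield the best Sobolev constant; the elementary route is shorter but does not.

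The one genuine soft spot in your proposal is precisely your ``central ingredient'': the downset isoperimetric inequality $P(t)\ge c\,V(t)^{(D-1)/D}$ is asserted but not proved. You correctly observe that the downset structure reduces it to a one-dimensional variational problem on decreasing profiles, but you do not carry that computation out. This matters because the proposition allows $a,b>-1$ with $a+b>0$, a range \emph{not} covered by Theorem~\ref{isoperimetric} (which requires $A_i\ge 0$); the monotonicity hypothesis is exactly what is supposed to rescue the inequality when an exponent is negative (cf.\ Remark~\ref{remass}), so this step carries real content and should be completed rather than left as a remark.
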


In \cite{CR} we also obtained Sobolev inequalities with other powers $|\nabla u|^p$, $1\leq p<D$.
By a standard scaling argument one sees that the exponent $2_*=\frac{2D}{D-2}$ in \eqref{sobsigmatau} is optimal, in the sense that \eqref{sobsigmatau} can not hold with any other exponent larger than this one.
In addition, when $a<0$ or $b<0$ inequality \eqref{sobsigmatau} is not valid without assumption \eqref{assumption}; see Remark \ref{remass} for more details.

\begin{rem}\label{rementers}
When $a$ and $b$ are positive \textit{integers}, inequality \eqref{sobsigmatau} is exactly the classical Sobolev inequality in $\R^D=\mathbb R^{a+1}\times\mathbb R^{b+1}$ for functions which are radially symmetric on the first $a+1$ variables and on the last $b+1$ variables.

Indeed, for each $z\in \mathbb R^D$ write $z=(z^1,z^2)$, with $z^1\in\mathbb R^{a+1}$ and $z^2\in\mathbb R^{b+1}$, and define $(\sigma,\tau)=(|z^1|,|z^2|)\in \{\sigma\geq0,\tau\geq0\}$.
Now, for each function $u$ in $(\R_+)^2$ we define $\tilde u(z)=u(|z_1|,|z_2|)$.
We have that $|\nabla_z \tilde u|=|\nabla_{(\sigma,\tau)} u|$.
Moreover, an integral over $\mathbb R^D$ of a function depending only on $|z^1|$ and $|z^2|$ can be written as an integral in $(\mathbb R_+)^2$ with $dz=c_{a,b}\sigma^a\tau^b d\sigma d\tau$ for some constant $c_{a,b}$. Therefore, writing in the coordinates $(\sigma,\tau)$ the classical Sobolev inequality in $\mathbb R^D$ for the function $\tilde u$, we obtain the validity of \eqref{sobsigmatau}.
Note that if $a>0$ and $b=0$ then we obtain the inequality in $\{\sigma>0\}$ instead of $\{\sigma>0,\tau>0\}$, that is, $\left(\int_{\{\sigma>0\}}\sigma^a|u|^{2_*}d\sigma d\tau\right)^{1/2_*}\leq C\left(\int_{\{\sigma>0\}}\sigma^a|\nabla u|^2d\sigma d\tau\right)^{1/2}$ --- and this motivates definition \eqref{R*} below in the case of a general monomial $x^A$.
\end{rem}

The same argument as in the previous remark, but now with multiple axial symmetries, shows the following.
When $A_1,...,A_n$ are nonnegative \textit{integers}, the Sobolev, isoperimetric, Morrey, and Trudinger inequalities with the monomial weight
\[x^A=|x_1|^{A_1}\cdots|x_n|^{A_n}\]
are exactly the classical ones in
\[\R^{A_1+1}\times\cdots\times\R^{A_n+1}\]
when written in radial coordinates for functions which are radially symmetric with respect to the first $A_1+1$ variables, also with respect to the next $A_2+1$ variables, and so on until radial symmetry with respect to the last $A_n+1$ variables.

The aim of this paper is to extend inequality \eqref{sobsigmatau} in $\R^2$ to the case of $\R^n$ with any weight of the form \eqref{monomialweight}, i.e., of the form $x^A=|x_1|^{A_1}\cdots|x_n|^{A_n}$.
When $A_i$ are nonnegative \textit{real} numbers, we prove that this weighted Sobolev inequality holds for any function $u\in C^1_c(\R^n)$ --- and thus assumption \eqref{assumption} is not necessary.
We obtain also Sobolev inequalities with $|\nabla u|^2$ replaced by other powers $|\nabla u|^p$.
More importantly, we find the best constant and extremal functions in these inequalities.
For this, a crucial ingredient is a new isoperimetric inequality  involving the weight $x^A$ and with best constant.
This is Theorem \ref{isoperimetric} below, a main result of this paper.
In addition, we prove Morrey and Trudinger type inequalities involving the monomial weight.
All these results were announced in our previous paper \cite{CR}.

The first result of the paper is the Sobolev inequality with a monomial weight, and reads as follows.
Here, and in the rest of the paper, we denote
\begin{equation}\label{R*} \mathbb R_*^n=\{(x_1,...,x_n)\in \mathbb R^n: x_i>0\ \textrm{whenever}\ A_i>0\}\end{equation}
and
\[B_r^*=B_r(0)\cap \R^n_*.\]
For each $1\leq p<\infty$, let $W^{1,p}_0(\R^n,x^Adx)$ be the closure of the space of $C^1_c(\R^n)$ under the norm $\left(\int_{\R^n}x^A(|u|^p+|\nabla u|^p)dx\right)^{1/p}$.

\begin{thm}\label{sob} Let $A$ be a nonnegative vector in $\mathbb R^n$, $D=A_1+\cdots+A_n+n$, and $1\leq p< D$ be a real number. Then,
\begin{itemize}
\item[(a)] There exists a constant $C_p$ such that for all $u\in C^{1}_c(\mathbb R^n)$,
    \begin{equation}\label{sobolev}\left(\int_{\mathbb R_*^n}x^A|u|^{p_*}dx\right)^{\frac{1}{p_*}}\leq C_p\left(\int_{\mathbb R_*^n}x^A|\nabla u|^pdx\right)^{\frac1p},\end{equation}
    where $p_*=\frac{pD}{D-p}$ and $x^A$ is given by \eqref{monomialweight}.
\item[(b)] The best constant $C_p$ is given by the explicit expression \eqref{bestcnt}-\eqref{bestcnt2}.
    When $p=1$, this constant is not attained in $W^{1,1}_0(\R^n,x^Adx)$.
    Instead, when $1<p<D$ it is attained in $W^{1,p}_0(\R^n,x^Adx)$ by the functions
    \begin{equation}\label{extr} u_{a,b}(x)=\left(a+b|x|^{\frac{p}{p-1}}\right)^{1-\frac{D}{p}},\end{equation}
    where $a$ and $b$ are any positive constants.
\end{itemize}
\end{thm}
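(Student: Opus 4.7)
The plan is to treat $p=1$ and $1<p<D$ by two distinct sharp arguments, both anchored on the weighted isoperimetric inequality (Theorem \ref{isoperimetric}).

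For $p=1$, I would use the classical coarea and layer-cake deduction: the weighted coarea formula gives $\int_{\R^n_*}x^A|\nabla u|\,dx=\int_0^\infty P_A(\{|u|>t\})\,dt$, and inserting the sharp isoperimetric bound $P_A(E)\geq C_1\,\mu_A(E)^{(D-1)/D}$ into the Minkowski integral inequality
\[
\left(\int_{\R^n_*} x^A|u|^{D/(D-1)}\,dx\right)^{(D-1)/D}\leq \int_0^\infty \mu_A(\{|u|>t\})^{(D-1)/D}\,dt
\]
delivers \eqref{sobolev} for $p=1$ with the sharp constant inherited from the isoperimetric one. Non-attainment in $W^{1,1}_0(\R^n,x^A dx)$ is standard: a minimizing sequence must concentrate on a characteristic function of a weighted ball $B_r^*$, which does not belong to that space.

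For $1<p<D$, I would adapt the Cordero-Erausquin--Nazaret--Villani optimal-transport argument to the monomial measure. Take the candidate extremal $v=u_{a,b}$, normalized so that $\int_{\R^n_*}|u|^{p_*}x^A\,dx=\int_{\R^n_*}|v|^{p_*}x^A\,dx$, and let $T=\nabla\varphi$, with $\varphi$ convex, be the Brenier map pushing $|u|^{p_*}x^A\,dx$ onto $|v|^{p_*}x^A\,dx$. The weighted Monge--Amp\`ere identity
\[
|u(x)|^{p_*}x^A=|v(T(x))|^{p_*}T(x)^A\det D^2\varphi(x),
\]
combined with the weighted AM-GM inequality
\[
D\left(\frac{T(x)^A}{x^A}\det D^2\varphi(x)\right)^{1/D}\leq \mathrm{div}\,T(x)+\sum_{i=1}^n A_i\frac{T_i(x)}{x_i},
\]
in which each $A_i$ plays the role of a (possibly non-integer) multiplicity in the $D$-dimensional AM-GM, yields a pointwise estimate. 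Integrating this against $|u|^{\alpha}x^A\,dx$ for the appropriate exponent $\alpha$, using $\partial_i(x^A f)=x^A(\partial_i f+A_i f/x_i)$ to rewrite the right-hand side as a weighted divergence and integrate by parts, and finally applying H\"older's inequality, produces \eqref{sobolev}. Equality forces $T$ to be a dilation and hence $u$ to be of the form \eqref{extr}, and the sharp constant is then the value of the Sobolev ratio at any $u_{a,b}$.

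The main technical obstacles are: (i) justifying the integration by parts on $\R^n_*$ even though $x^A$ vanishes on the coordinate hyperplanes $\{x_i=0\}$ where $A_i>0$, which should follow from the compact support of $u$ together with a regularization of the weight; (ii) ensuring that the Brenier map $T$ is sufficiently regular and, crucially, that it sends $\R^n_*$ into itself so that $T_i/x_i$ and $T(x)^A$ are well defined for non-integer $A_i$; and (iii) tracking the equality case in the weighted AM-GM step in order to identify \eqref{extr} as the only extremals.
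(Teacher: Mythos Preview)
Your treatment of $p=1$ is essentially identical to the paper's: coarea, Minkowski's integral inequality, and the sharp isoperimetric bound \eqref{isop} combine exactly as you describe, and non-attainment is deduced in the same way (see Remark~\ref{optconst1}).

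For $1<p<D$, however, you take a genuinely different route. The paper does \emph{not} run a direct sharp argument. Instead, part~(a) is obtained from the $p=1$ case by the standard substitution $v=|u|^{p_*/1_*}$ and H\"older, which gives a (non-sharp) constant. For part~(b), the paper invokes the isoperimetric inequality a second time, but indirectly: Talenti's weighted rearrangement (Proposition~\ref{rear}) reduces the minimization to radially decreasing competitors, and then the one-dimensional Bliss--Talenti Lemma~\ref{t2} identifies both the sharp constant and the extremals $u_{a,b}$. Your proposal replaces this two-step ``symmetrize then solve an ODE'' scheme by a single Cordero-Erausquin--Nazaret--Villani transport computation, with the weighted AM--GM inequality \eqref{weightedAM-GM} (the same one used in the proof of Theorem~\ref{isoperimetric}) playing the role of the usual $(\det D^2\varphi)^{1/n}\le \Delta\varphi/n$.

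What each approach buys: the paper's method is technically lighter---once one trusts Talenti's rearrangement theorem, everything reduces to a clean one-variable problem---and it separates existence of \emph{some} constant from the identification of the \emph{best} one. Your transport argument is more unified (inequality, constant, and extremals in one stroke) and makes the role of the ``dimension'' $D$ transparent through the weighted AM--GM. The price is the technical obstacles you list; obstacle~(ii) is the most delicate, but since the target measure $|v|^{p_*}x^A\,dx$ is supported in $\overline{\R^n_*}$, the Brenier map must land there almost everywhere, so $T_i(x)\ge 0$ whenever $A_i>0$ and the ratio $T(x)^A/x^A$ is well defined. The integration-by-parts step~(i) requires care near $\partial\R^n_*$ but is manageable, as the weight itself vanishes there.

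One small overreach: you write that ``equality forces $T$ to be a dilation and hence $u$ to be of the form \eqref{extr}.'' The statement of Theorem~\ref{sob} only asserts that the $u_{a,b}$ \emph{are} extremals, not that they are the only ones; indeed the paper explicitly remarks that uniqueness of extremals is open for non-integer $A_i$. Your transport argument, if the equality analysis can be made rigorous in this weighted setting, would actually prove more than the paper claims---but you should be aware that this is an additional (and nontrivial) step, not a restatement of part~(b).
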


Note that the exponent $p_*$ is exactly the same as in the classical Sobolev inequality, but in this case the ``dimension'' is given by
$D$ instead of $n$. Note also that when $A_1=...=A_n=0$ then $D=n$ and \eqref{sobolev} is exactly the classical Sobolev inequality.
As before, a scaling argument shows that the exponent $p_*$ is optimal, in the sense that \eqref{sobolev} can not hold with any other exponent.

Note that the integrals in \eqref{sobolev} are computed over $\R^n_*$ but the functions $u$ involved need not vanish on the coordinate hyperplanes on $\partial\R^n_*$.
Let us mention that $u_{a,b}$ are extremal functions for inequality \eqref{sobolev}, but we do not know if these are all extremal functions for the inequality --- except in the case when all $A_i$ are integers.

The Sobolev inequalities in all of $\R^n$ follow easily (without the best constant) from the ones in $\R^n_*$ by applying them at most $2^n$ times (one for each hyperoctant of $\R^n$, that is, for each set $\{\epsilon_ix_i>0,\ i=1,...,n\}$, where $\epsilon_i\in\{-1,1\}$) and adding up the obtained inequalities.
Consider now functions $u\in C^1_c(\R^n)$ that are even with respect to those variables $x_i$ for which $A_i>0$.
They arise naturally in nonlinear problems in $\R^D$ whenever $D$ is an integer (see \cite{CR}).
Among these functions, the Sobolev inequality in all of $\R^n$ has also as extremals the functions $u_{a,b}$ in \eqref{extr}.

After a change of variables of the form $x_i=y_i^{\gamma_i}$, \eqref{sobolev} yields new inequalities of the form
\[\|u\|_{L^{p_*}(\R^n_*)}\leq C\sum_{i=1}^n \|x_i^{\alpha_i}u_{x_i}\|_{L^p(\R^n_*)},\]
where $\alpha_i$ are arbitrary exponents in $[0,1)$; see Corollary \ref{cor}.
In these inequalities, the exponent on the left hand side is given by $p_*=\frac{pD}{D-p}$, where $D=n+\frac{\alpha_1}{1-\alpha_1}+\cdots+\frac{\alpha_n}{1-\alpha_n}$.

When $p>1$ and $A_i<p-1$ for all $i=1,...,n$, the weight \eqref{monomialweight} belongs to the Muckenhoupt class $A_p$, and thus part (a) --- without the best constant and for bounded domains --- can be deduced from some classical results on weighted Sobolev inequalities.
Indeed, it follows from a classical result of Fabes-Kenig-Serapioni \cite{FKS} that for any bounded domain $\Omega\subset \R^n$ there exists $q>p$ for which
$\|u\|_{L^q(\Omega,x^Adx)}\leq C\|u\|_{W^{1,p}_0(\Omega,x^Adx)}$ holds.
Moreover, the optimal exponent $q=p_*$ can be found by using a result of Hajlasz \cite[Theorem 6]{H}.
However, in general the monomial weight \eqref{monomialweight} does not satisfy the Muckenhoupt condition $A_p$ and Theorem \ref{sob} cannot be deduced from these results on weighted Sobolev inequalities, even without the best constant in the inequality.

The main ingredient in the proof of Theorem \ref{sob} is a new weighted isoperimetric inequality with best constant, given by Theorem \ref{isoperimetric} below.
Let us mention that if one is willing not to have the best constant in the Sobolev inequality, we give an alternative and more elementary proof of part (a) of Theorem \ref{sob} under some additional hypotheses.
Namely, we assume $A_i>0$ for all $i$ and $u_{x_i}\leq0$ in $\{x_i>0,\ i=1,...,n\}$ --- an assumption equivalent to \eqref{assumption} in Proposition \ref{sobsigmatau} and which suffices for some applications to nonlinear problems.

The following is the new isoperimetric inequality with a monomial weight.

\begin{thm}\label{isoperimetric} Let $A$ be a nonnegative vector in $\R^n$, $x^A$ given by \eqref{monomialweight}, and $D=A_1+\cdots+A_n+n$.
Let $\Omega\subset \R^n$ be a bounded Lipschitz domain. Denote
\begin{equation*}\label{m(E)} m(\Omega)=\int_\Omega x^Adx\qquad\textrm{and}\qquad P(\Omega)=\int_{\partial\Omega} x^Ad\sigma.\end{equation*}
Then,
\begin{equation} \label{isop}
\frac{P(\Omega)}{m(\Omega)^{\frac{D-1}{D}}} \geq \frac{P(B_1^*)}{m(B_1^*)^{\frac{D-1}{D}}} \ ,
\end{equation}
where $B_1^*=B_1(0)\cap \mathbb R^n_*$ is the unit ball intersected with $\mathbb R^n_*$, and $\R^n_*$ is given by \eqref{R*}.
\end{thm}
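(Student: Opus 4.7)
The plan is to adapt the ABP (Alexandrov--Bakelman--Pucci) method that Cabr\'e used to prove the classical isoperimetric inequality, now with the weight $x^A$. A preliminary reduction shows it suffices to treat the case $\Omega \subset \overline{\R^n_*}$. Split $\Omega$ into its disjoint open intersections $\Omega^\epsilon$ with the $2^k$ hyperoctants determined by the hyperplanes $\{x_i = 0\}$, with $i$ ranging over indices for which $A_i > 0$. Since $x^A$ vanishes identically on every hyperplane $\{x_i = 0\}$ with $A_i > 0$, the new boundary introduced by the splitting contributes nothing to the perimeter, so $P(\Omega) = \sum_\epsilon P(\Omega^\epsilon)$ and $m(\Omega) = \sum_\epsilon m(\Omega^\epsilon)$. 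Reflecting each $\Omega^\epsilon$ into $\R^n_*$ preserves both $m$ and $P$ by the invariance of $x^A$ under $x_i \mapsto -x_i$. If \eqref{isop} is established for each reflected piece, the subadditivity inequality $a^{(D-1)/D} + b^{(D-1)/D} \geq (a+b)^{(D-1)/D}$ for $a,b \geq 0$ yields the full result. From here on I assume $\Omega \subset \overline{\R^n_*}$.

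Next I solve the weighted Neumann problem
\begin{equation*}
\div(x^A\,\nabla u) = b_\Omega\, x^A \ \ \text{in } \Omega, \qquad u_\nu = 1 \ \ \text{on } \partial\Omega,
\end{equation*}
where $b_\Omega := P(\Omega)/m(\Omega)$ is forced by the compatibility condition. Away from $\{x_i = 0\}$ this reads $\Delta u + \sum_{i} (A_i/x_i)\, u_{x_i} = b_\Omega$. Consider the lower contact set
\begin{equation*}
\Gamma_u := \{x \in \Omega : u(y) \geq u(x) + \nabla u(x)\cdot(y-x) \ \text{for all } y \in \Omega\},
\end{equation*}
on which $D^2 u \geq 0$. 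The boundary condition $u_\nu = 1$ forces $B_1 \subset \nabla u(\Gamma_u)$: for any $p \in B_1$, the minimum on $\overline{\Omega}$ of $y \mapsto u(y) - p\cdot y$ cannot lie on $\partial\Omega$, since there $\partial_\nu(u - p\cdot y) = 1 - p\cdot\nu \geq 1 - |p| > 0$; hence the minimum is attained at an interior point $x \in \Gamma_u$ with $\nabla u(x) = p$.

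The core pointwise estimate is a weighted AM-GM. At any $x \in \Gamma_u \cap \nabla u^{-1}(B_1^*)$ one has $x_i > 0$ (since $\Omega \subset \overline{\R^n_*}$) and $u_{x_i}(x) > 0$ whenever $A_i > 0$. Writing $\lambda_1,\ldots,\lambda_n \geq 0$ for the eigenvalues of $D^2 u(x)$, the equation gives
\begin{equation*}
\lambda_1 + \cdots + \lambda_n + \sum_{j:\, A_j > 0} A_j\, \frac{u_{x_j}(x)}{x_j} = b_\Omega,
\end{equation*}
a sum of $D = A_1 + \cdots + A_n + n$ nonnegative terms counted with multiplicity $A_j$. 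Applying weighted AM-GM with weights $1/D$ on each $\lambda_i$ and $A_j/D$ on each $u_{x_j}/x_j$, and raising to the $D$-th power, yields
\begin{equation*}
|\nabla u(x)|^A\, \det D^2 u(x) \leq \bigl(b_\Omega/D\bigr)^D\, x^A,
\end{equation*}
where I write $|\nabla u|^A := \prod_i |u_{x_i}|^{A_i}$. The area formula applied to the map $\nabla u : \Gamma_u \to \R^n$ with test weight $y^A \mathbf{1}_{B_1^*}(y)$ gives
\begin{equation*}
m(B_1^*) = \int_{B_1^*} y^A\, dy \leq \int_{\Gamma_u} |\nabla u|^A\, \det D^2 u\, dx \leq (b_\Omega/D)^D\, m(\Omega) = \frac{P(\Omega)^D}{D^D\, m(\Omega)^{D-1}}.
\end{equation*}

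To recover \eqref{isop} from the displayed inequality I verify the identity $P(B_1^*) = D\, m(B_1^*)$ via the divergence theorem applied to $V(x) := x\, x^A$ on $B_1^*$: a direct computation gives $\div V = (n + A_1 + \cdots + A_n)\, x^A = D\, x^A$, and on $\partial B_1^*$ one has either $x\cdot\nu = 1$ (on the spherical part), $x^A = 0$ (on flat parts with some $A_j > 0$), or $x\cdot\nu = 0$ (on flat parts with $A_j = 0$). Hence $D\, m(B_1^*) = P(B_1^*)$, and this turns the displayed inequality into $P(\Omega)/m(\Omega)^{(D-1)/D} \geq P(B_1^*)/m(B_1^*)^{(D-1)/D}$. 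The main obstacle I foresee is the ABP step itself: producing a solution of the degenerate/singular Neumann problem that is regular enough for the contact-set analysis (so $D^2 u$ exists a.e.\ on $\Gamma_u$, the area formula applies, and the pointwise equation holds a.e.\ on $\Gamma_u \cap \nabla u^{-1}(B_1^*)$). This will likely require approximating $\Omega$ by smooth domains contained in $\R^n_*$ and replacing $x^A$ by a regularised weight, then passing to the limit using the scale-invariance of the resulting inequality.
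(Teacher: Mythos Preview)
Your proposal is correct and follows essentially the same approach as the paper: the same reduction to hyperoctants (the paper phrases it via a $\min_j$ argument rather than subadditivity of $t^{(D-1)/D}$, but this is cosmetic), the same weighted Neumann problem with $b_\Omega = P(\Omega)/m(\Omega)$, the same contact-set inclusion $B_1^* \subset \nabla u(\Gamma_u^*)$, the same weighted AM--GM yielding $(\nabla u)^A \det D^2 u \leq (b_\Omega/D)^D x^A$, and the same identification $P(B_1^*) = D\,m(B_1^*)$. The regularity obstacle you flag at the end is exactly the one the paper handles, and in exactly the way you anticipate: approximate $\Omega$ by smooth domains with closure in $\R^n_*$, so that the operator is uniformly elliptic and $u \in C^{1,1}(\overline{\Omega})$ (no weight regularisation is needed once $\overline{\Omega} \subset \R^n_*$).
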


It is a surprising fact that the weight $x^A$ is not radially symmetric but still Euclidean balls centered at the origin (intersected with $\R^n_*$) minimize this isoperimetric quotient.

Recently, these type of isoperimetric inequalities with weights (also called ``with densities'') have attracted much attention; see the nice survey of F. Morgan in the Notices of the AMS \cite{M}.
In a forthcoming paper \cite{CRS} we will prove new weighted isoperimetric inequalities in convex cones of $\R^n$ that extend Theorem \ref{isoperimetric};
some of them have been announced in \cite{CRS-CRAS}.

Equality in \eqref{isop} holds when $\Omega=B_r^*=B_r(0)\cap \R^n_*$, where $r$ is any positive number.
We expect these balls centered at the origin intersected with $\R^n_*$ to be the unique minimizers of the isoperimetric quotient.
However, our proof involves the solution of an elliptic equation and due to an issue on its regularity we need to regularize slightly the domain $\Omega$.
This is why we can not obtain that $B_r^*$ are the unique minimizers of \eqref{isop}.
In a future paper \cite{CRS2} (still in progress) we will study the non uniformly elliptic operator \eqref{operator} below and prove some regularity results in $\overline{\R^n_*}$ which may lead to the characterization of equality in the isoperimetric inequality \eqref{isop}.

\begin{rem} Note that, when $A\neq0$, the entire balls $B_r=B_r(0)$ are not minimizers of the isoperimetric quotient.
This is because
\[\frac{P(B_1^*)}{m(B_1^*)^{\frac{D-1}{D}}}=2^{-\frac kD}\frac{P(B_1)}{m(B_1)^{\frac{D-1}{D}}}<\frac{P(B_1)}{m(B_1)^{\frac{D-1}{D}}},\]
where $k$ is the number of positive entries in the vector $A$.
However, if we look for the minimizers of the isoperimetric quotient $P(\Omega)/m(\Omega)^{\frac{D-1}{D}}$ among all sets $\Omega$ which are symmetric with respect to each plane $\{x_i=0\}$ with $i$ such that $A_i>0$, then the balls $B_r(0)$ solve this isoperimetric problem.
\end{rem}

As explained below in Remark \ref{grig}, the fact that $P(\Omega)/m(\Omega)^{\frac{D-1}{D}}\geq c$ for some constant $c>0$ smaller than the one in \eqref{isop} (and hence, nonoptimal) is an interesting consequence of the isoperimetric inequality in product manifolds of A. Grigor'yan \cite{G}.

As said before, our sharp isoperimetric inequality \eqref{isop} is the crucial ingredient needed to prove Theorem \ref{sob} on the Sobolev inequality, especially part (b) on the best constant and on extremals.
Indeed, we prove part (b) by applying our isoperimetric inequality with best constant together with two results of Talenti.
The first one is a radial symmetrization result, which applies since our isoperimetric inequality \eqref{isop} gives the best constant and the sets $B_r(0)\cap \R^n_*$ are extremal sets for any $r>0$.
The second one is a result in dimension 1, which characterizes the minimizers of the functional
\[J(u)=\frac{\left(\int_0^\infty r^{D-1}|u'|^p\right)^{1/p}}{\left(\int_0^\infty r^{D-1}|u|^{p_*}\right)^{1/p_*}},\]
where $p_*=\frac{pD}{D-p}$.

When $n=2$ and $A_1=0$, our Sobolev and isoperimetric inequalities with best constant were already obtained by Maderna and Salsa \cite{MS} in 1981.
Namely, they proved the sharp isoperimetric inequality in $\{(x,y)\in \R^2\,:\, y>0\}$ with weight $y^k$, $k>0$, and from it they deduced the Sobolev inequality with weight $y^k$.
These inequalities arose in the study of an elliptic problem which involved the operator $y^{-k}\textrm{div}(y^k\nabla u)$ in $\{(x,y)\in\mathbb R^2\,:\, y>0\}$, where $k$ is any positive number.
Using symmetrization techniques and their weighted isoperimetric inequality, they obtained sharp estimates for the solution of the problem.
To prove the isoperimetric inequality with weight $y^k$ they first established the existence of a minimizer for the perimeter functional under constraint of fixed area, then computed the first variation of this functional, and finally solved the obtained ODE to deduce that minimizers must be half balls.
Their result can be seen as a particular case of Theorem \ref{isoperimetric} by setting $n=2$ and $A_1=0$.
Our proof of the weighted isoperimetric inequality will be completely different from the one in \cite{MS}, as explained next.

The proof of Theorem \ref{isoperimetric} follows the ideas introduced by the first author in a new proof of the classical isoperimetric inequality;
see \cite{CSCM,CDCDS} or the last edition of Chavel's book \cite{Ch}.
It is quite surprising (and fortunate) that this proof (which gives the best constant)
can be adapted to the case of monomial weights.

The proof of the classical isoperimetric inequality from \cite{CSCM,CDCDS} considers the linear problem
\begin{equation} \label{eqlaplace}
\left\{ \begin{array}{ll} \Delta u = c  &\quad \mbox{in } \Omega\\
\frac{\partial u}{\partial\nu} =1 &\quad \textrm{on }\partial \Omega,
\end{array}\right. \end{equation}
where $c$ is the unique constant for which the problem has a solution.
Then, one uses an argument similar to the Alexandroff-Bakelman-Pucci method (also called ABP method; see for example \cite{GT}) applied to this solution $u$.
Using this argument and the classical inequality between the arithmetic mean (AM) and the geometric mean (GM), the isoperimetric inequality follows.
When $\Omega=B_1$, the solution of \eqref{eqlaplace} is $u(x)=|x|^2/2$ and all inequalities in the proof become equalities.
Here we consider a similar problem to \eqref{eqlaplace} but where the Laplacian is replaced by the operator
\begin{equation}\label{operator}
x^{-A}{\rm div}(x^A\nabla u)=\Delta u+A_1\frac{u_{x_1}}{x_1}+\cdots+A_n\frac{u_{x_n}}{x_n}.\end{equation}
Now, using the same ABP argument with this new problem and a weighted version of the
AM-GM inequality, we obtain \eqref{isop}.
An essential fact in our proof (and this is why $B_1(0)\cap \R^n_*$ is the minimizer) is that the function $u(x)=|x|^2/2$ also solves the equation $x^{-A}{\rm div}(x^A\nabla u)=c$ for some constant $c>0$.
In addition, it has normal derivative $u_\nu=1$ on $\partial B_1$, as in problem \eqref{eqlaplace}.

When $A_1,...,A_n$ are nonnegative integers, the operator
\eqref{operator} is the Laplacian in the space $\R^D=\R^{A_1+1}\times\cdots\times\R^{A_n+1}$ written in radial coordinates.
Thus, if instead $A_i$ are not integers, \eqref{operator} can be seen as some kind of Laplacian in a fractional dimension $D$.
This class of operators was studied by A. Weinstein and others for $n=2$, and the theory on these equations is called ``Generalized Axially Symmetric Potential Theory''; see for example \cite{W}.
In case $A_1=\cdots=A_{n-1}=0$ and $A_n=a\in(-1,1)$, the operator $x^{-A}{\rm div}(x^A\nabla u)$ appears in the re-interpretation of the fractional Laplacian as a local problem in one higher dimension; see \cite{CS}.

The paper \cite{IN} by Ivanov and Nazarov establishes some weighted Sobolev inequalities for $W^{1,p}$ functions with multiple radial symmetries --- a space of functions denoted by $W^{1,p}_{\rm sym}$.
Their result is related to ours in the case in which all the exponents $A_i$ are nonnegative \emph{integers}.
They prove that for functions with multiple radial symmetries in $\R^D$, the embedding $W^{1,p}_{\rm sym}(B_1)\subset L^q(B_1;\,|x|^{\alpha})$, with $p<D$ and $\alpha>0$, holds for some exponents $q$ depending on $\alpha$ that are greater than $p^*=pD/(p-D)$.

Some theorems of trace and interpolation type for functional spaces with weights of the form \eqref{monomialweight} were proved by A. Cavallucci \cite{C} in 1969.
Namely, he established some inequalities of the form
\[\|D^\lambda f\|_{L^p((\R_+)^m\times \{0\},y^Bdy)}\leq C\left(\|f\|_{L^p((\R_+)^n,x^Adx)}+\|D^l f\|_{L^p((\R_+)^n,x^Adx)}\right),\]
where $m\leq n$, $y^B=y_1^{B_1}\cdots y_m^{B_m}$ and $x^A=x_1^{A_1}\cdots x_n^{A_n}$ are two monomial weights, and $\lambda$ and $l$ are multiindices satisfying a certain condition involving $A$, $B$, $m$, $n$, and $p$.
Note that in these inequalities the exponent $p$ is the same in both sides, and thus they are not Sobolev-type inequalities.
To obtain his results, the author used a representation of $D^\lambda f$ in terms of integral transforms of $D^l f$.

The third result of our paper is the weighted version of the Morrey inequality, which reads as follows.

\begin{thm}\label{morrey} Let $A$ be a nonnegative vector in $\mathbb R^n$, $D=A_1+\cdots+A_n+n$, and $p>D$ be a real number.
Then, there exists a constant $C$, depending only on $p$ and $D$, such that
\begin{equation}\label{holder}
\sup_{x\neq y,\ x,\,y\in \R^n_*}\frac{|u(x)-u(y)|}{|x-y|^\alpha}\leq C\left(\int_{\mathbb R^n_*}x^A|\nabla u|^pdx\right)^{1/p}\end{equation}
for all $u\in C^{1}_c(\mathbb R^n)$, where $\alpha=1-\frac{D}{p}$.

As a consequence, if $\Omega\subset\R^n$ is a bounded domain and $u\in C^1_c(\Omega)$ then
\begin{equation}\label{cormorrey}
\sup_{\Omega}|u|\leq C\,{\rm diam}(\Omega)^{1-\frac{D}{p}}\left(\int_\Omega x^A|\nabla u|^pdx\right)^{1/p}.
\end{equation}
\end{thm}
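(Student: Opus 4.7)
The plan is to adapt the classical proof of Morrey's inequality by combining a pointwise Riesz-potential representation with H\"older's inequality, handling the monomial weight after H\"older. Fix $x_0\in\R^n_*$ and $r>0$. The fundamental theorem of calculus along the segment from $x_0$ to $y$ gives
\[|u(x_0)-u(y)|\leq \int_0^1 |\nabla u(x_0+t(y-x_0))|\,|y-x_0|\,dt.\]
A crucial fact is that for $x_0,y\in\R^n_*$ the entire segment lies in $\R^n_*$, since $(1-t)x_{0,i}+ty_i>0$ whenever $A_i>0$. Averaging over $y\in B_r(x_0)\cap\R^n_*$ and swapping the order of integration via the substitution $w=x_0+t(y-x_0)$ (with $dy=t^{-n}dw$) yields the standard bound (using also $|B_r(x_0)\cap\R^n_*|\geq 2^{-n}|B_r(x_0)|$)
\[\left|u(x_0)-\frac{1}{|B_r(x_0)\cap\R^n_*|}\int_{B_r(x_0)\cap\R^n_*}u\,dy\right|\leq C\int_{B_r(x_0)\cap\R^n_*}\frac{|\nabla u(w)|}{|w-x_0|^{n-1}}\,dw.\]

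H\"older's inequality with exponents $p$ and $p/(p-1)$ then produces
\[\int_{B_r(x_0)\cap\R^n_*}\frac{|\nabla u|}{|w-x_0|^{n-1}}\,dw\leq \left(\int w^A|\nabla u|^p\,dw\right)^{1/p} I(x_0,r)^{(p-1)/p},\]
where $I(x_0,r):=\int_{B_r(x_0)\cap\R^n_*}w^{-A/(p-1)}|w-x_0|^{-(n-1)p/(p-1)}\,dw$. The assumption $p>D$ gives both $p>n$ and $A_i<p-1$ for each $i$ (since $p>D\geq n+A_i\geq 1+A_i$), which are precisely the conditions needed so that $w^{-A/(p-1)}$ is locally integrable near the coordinate hyperplanes and $|w-x_0|^{-(n-1)p/(p-1)}$ is integrable near $w=x_0$.

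The main obstacle is the uniform scaling estimate $I(x_0,r)\leq C\,r^{(p-D)/(p-1)}$ in $x_0\in\R^n_*$. By the change of variables $w=rz$ the problem reduces to proving $I(\xi,1)\leq C$ uniformly in $\xi\in\R^n_*$; the worst case occurs as $\xi\to 0$, where polar coordinates yield $I(0,1)=C\int_0^1\rho^{-(D-1)/(p-1)}d\rho$, finite exactly when $p>D$, and give the expected power of $r$ after unscaling. For general $\xi$ one exploits the product structure of the monomial weight to integrate each coordinate separately, so that the axial singularity of $w^{-A/(p-1)}$ (of order $A_i/(p-1)<1$ in the $i$-th direction) and the point singularity at $w=x_0$ (of order $(n-1)p/(p-1)<n$) do not accumulate, again thanks to $p>D$.

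Combining the three estimates gives $|u(x_0)-\bar u|\leq C\,r^{1-D/p}\|\nabla u\|_{L^p(\R^n_*,\,x^Adx)}$ for the unweighted mean $\bar u$ of $u$ over $B_r(x_0)\cap\R^n_*$. Inequality \eqref{holder} follows by the standard triangle-inequality trick: for $x,y\in\R^n_*$ with $r=|x-y|$, both $u(x)$ and $u(y)$ are close to the unweighted average over the common ball $B_{2r}((x+y)/2)\cap\R^n_*$, and segments from either $x$ or $y$ to points in that set stay inside $\R^n_*$ so that the straight-line representation applies from either base point. Finally, \eqref{cormorrey} is immediate from \eqref{holder} by picking any point $y$ in $\Omega$ outside the support of $u$ (so $u(y)=0$) and bounding $|x-y|\leq\diam(\Omega)$.
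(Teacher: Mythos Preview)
Your route differs from the paper's and has a genuine gap at its central step: the uniform bound $I(\xi,1)\leq C$ over all $\xi\in\R^n_*$. You assert it and say one ``exploits the product structure of the monomial weight to integrate each coordinate separately,'' but the point singularity $|w-\xi|^{-b}$ (with $b=(n-1)p/(p-1)$) is not a product, and when $\xi$ approaches one or several coordinate hyperplanes the two types of singularity interact. What is actually needed is, for instance, the pointwise splitting $|w-\xi|^{-b}\leq\prod_j|w_j-\xi_j|^{-b_j}$ with $b_j\geq 0$ and $\sum_j b_j=b$, together with a choice $b_j<1-A_j/(p-1)$ for every $j$; such a choice exists precisely because $\sum_j A_j/(p-1)+b<n$, which is equivalent to $p>D$. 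Only then does each one--dimensional factor $\int_0^{\xi_j+1}w_j^{-A_j/(p-1)}|w_j-\xi_j|^{-b_j}\,dw_j$ remain bounded as $\xi_j\downarrow 0$. Without this (or an equivalent device) the estimate is unproved, and it is exactly where the hypothesis $p>D$ must do its work.

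The paper sidesteps this difficulty by first proving a \emph{weighted} potential bound anchored at the origin, $|u(y)-u(0)|\leq C\int_{B^*_{2|y|}}|\nabla u(x)|\,|x|^{-(D-1)}x^A\,dx$. The trick is to round each $A_i$ up to an integer $B_i=\lceil A_i\rceil$, lift $u$ to an axially symmetric function on $\R^N$ with $N=\sum B_i+n$, apply the classical estimate there, and then use $x^B/|x|^{\sum B_i}\leq x^A/|x|^{\sum A_i}$ to pass back. After this lemma, H\"older against the measure $x^A\,dx$ is immediate because the remaining integral is purely radial in the weighted sense. General $y,z\in\R^n_*$ are then handled not via a common ball but by translation (using $(x-z)^A\leq x^A$ when $y-z\in\R^n_*$) followed by the corner point $w_i=\min(y_i,z_i)$. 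Finally, your derivation of \eqref{cormorrey} is also incomplete: the maximum of $|u|$ over $\Omega$ may be attained outside $\overline{\R^n_*}$, and a point of $\Omega\setminus\mathrm{supp}\,u$ need not lie in $\R^n_*$; the paper reflects into $\R^n_*$ before applying \eqref{holder}.
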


This weighted Morrey inequality will be deduced from the classical one by applying it in dyadic domains and then summing geometric series appropriately; see Section \ref{sec5} for more details.

The next result is the weighted version of the classical Trudinger inequality.

\begin{thm}\label{trudinger} Let $A$ be a nonnegative vector in $\mathbb R^n$, $D=A_1+\cdots+A_n+n$, and $\Omega\subset \R^n$ be a bounded domain.
Then, for each $u\in C^1_c(\Omega)$,
\begin{equation*}\label{trud}
\int_\Omega \exp\left\{\left(\frac{c_1|u|}{\|\nabla u\|_{L^D(\Omega,x^Adx)}}\right)^{\frac{D}{D-1}}\right\} x^Adx\leq C_2m(\Omega),
\end{equation*}
where $m(\Omega)=\int_\Omega x^Adx$, and $c_1$ and $C_2$ are constants depending only on $D$.
\end{thm}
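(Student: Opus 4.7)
The plan is to deduce the weighted Trudinger inequality from Theorem \ref{sob} by the classical scheme of Trudinger: expand the exponential as a power series, estimate each $L^q$ norm of $u$ via the weighted Sobolev inequality with a carefully chosen $p<D$, and verify that the resulting series of integrals converges geometrically. After replacing $u$ by $u/\|\nabla u\|_{L^D(\Omega,x^Adx)}$, I may assume this quantity equals one, and the goal becomes bounding $\int_\Omega \exp(c_1^{D/(D-1)} |u|^{D/(D-1)})\, x^A dx$ by a constant times $m(\Omega)$.

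For each target exponent $q\geq D$, I would apply Theorem \ref{sob}(a) with Sobolev exponent $p=qD/(q+D)<D$, which is exactly the value making $p_*=q$. Combining this with H\"older's inequality in the measure $x^Adx$ (using the conjugate pair $D/p$ and $D/(D-p)$ applied to $|\nabla u|^p\cdot 1$),
\[
\|u\|_{L^q(\Omega,x^Adx)} \le C_p\,\|\nabla u\|_{L^p(\Omega,x^Adx)} \le C_p\,m(\Omega)^{\frac1p-\frac1D}\|\nabla u\|_{L^D(\Omega,x^Adx)} = C_p\,m(\Omega)^{1/q},
\]
so that $\int_\Omega |u|^q\,x^A dx \le C_p^{\,q}\, m(\Omega)$. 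The quantitative heart of the argument is the asymptotic bound $C_p\le K\,q^{(D-1)/D}$ as $p\uparrow D$, with $K$ depending only on $D$. This I would read off from the explicit expression \eqref{bestcnt}-\eqref{bestcnt2}: since $D-p=D^2/(D+q)$, the factor $((p-1)/(D-p))^{(p-1)/p}$ grows like $q^{(D-1)/D}$, while the gamma-function part remains bounded as $p\to D^-$.

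Specializing to $q_k=kD/(D-1)$ yields $\int_\Omega |u|^{q_k}\,x^Adx \le K^{q_k}(kD/(D-1))^k\,m(\Omega)$. Expanding the exponential,
\[
\int_\Omega \exp\!\left(c_1^{D/(D-1)}|u|^{D/(D-1)}\right) x^A dx = \sum_{k=0}^\infty \frac{c_1^{q_k}}{k!}\int_\Omega |u|^{q_k}\,x^A dx,
\]
and using Stirling's bound $k!\ge (k/e)^k$, the $k$-th term is at most $m(\Omega)\cdot\bigl[c_1^{D/(D-1)}K^{D/(D-1)}\,eD/(D-1)\bigr]^k$. Choosing $c_1$ small enough depending only on $D$ makes this a convergent geometric series, producing the constant $C_2$ depending only on $D$. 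The main obstacle is the asymptotic analysis of the best Sobolev constant $C_p$ as $p\uparrow D$: one must verify from \eqref{bestcnt}-\eqref{bestcnt2} that the gamma-function factor stays bounded on $(1,D)$, so that the blow-up of $C_p$ is driven exclusively by the $((p-1)/(D-p))^{(p-1)/p}$ term and has precisely the $q^{(D-1)/D}$ rate needed for the geometric series to converge.
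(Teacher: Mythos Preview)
Your proposal is correct and follows essentially the same route as the paper: both normalize $\|\nabla u\|_{L^D}=1$, apply the weighted Sobolev inequality with $p=qD/(q+D)$ together with H\"older to get $\int_\Omega |u|^q\,x^Adx\le C_p^{\,q}m(\Omega)$, use the asymptotic $C_p\le C_0\,p_*^{(D-1)/D}$ read off from \eqref{bestcnt}--\eqref{bestcnt2} (this is Lemma \ref{estimacioCp} in the paper), specialize to $q_k=kD/(D-1)$, and sum the exponential series using Stirling. The only point to be careful about is that the first term $k=1$ corresponds to $p=1$, so you should either invoke \eqref{bestcnt} directly there or note, as the paper does, that $C_p$ is bounded on the whole range $1\le p<D$ away from $p=D$.
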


Our proof of this result is based on a bound for the best constant \eqref{bestcnt2} in the weighted Sobolev inequality as $p$ goes to $D$.
Then, the Trudinger inequality will follow by expanding $\exp(\cdot)$ as a power series and applying the weighted Sobolev inequality to each term of the series.
The obtained series is convergent thanks to the mentioned bound for the best constant \eqref{bestcnt2}.

Finally, adding up the results of Theorems \ref{sob}, \ref{morrey}, and \ref{trudinger} we obtain the following continuous embeddings, which are
weighted versions of the classical Sobolev embeddings.

Recall that the Orlicz space $L^\varphi(X,d\mu)$ is defined as the space of measurable functions $u:X\rightarrow\R$ such that
\[\|u\|_{L^\varphi(X,d\mu)}=\inf\left\{K>0: \int_X \varphi\left(\frac{|u|}{K}\right)d\mu\leq1\right\}\]
is finite. Setting $\varphi(t)=t^p$ we recover the definition of the $L^p$ spaces.

\begin{cor}\label{embeddings} Let $A$ be a nonnegative vector in $\mathbb R^n$, $x^A$ be given by \eqref{monomialweight}, and $D=A_1+\cdots+A_n+n$.
Let $k\geq1$ be an integer and $p\geq1$ be a real number.
Then, for any bounded domain $\Omega\subset\R^n$ we have the following continuous embeddings:
\begin{itemize}
\item[(i)] If $kp<D$ then
\[W^{k,p}_0(\Omega,x^Adx)\subset L^{q}(\Omega,x^Adx),\]
where $q$ is given by $\frac{1}{q}=\frac{1}{p}-\frac{k}{D}$.
\item[(ii)] If $kp=D$ then
\[W^{k,p}_0(\Omega,x^Adx)\subset L^\varphi(\Omega,x^Adx),\]
where
\[\varphi(t)=\exp\left(t^{\frac{D}{D-1}}\right)-1.\]
\item[(iii)] If $kp>D$ then
\[W^{k,p}_0(\Omega,x^Adx)\subset C^{r,\alpha}(\overline{\Omega}),\]
where $r=k-[\frac Dp]-1$, and $\alpha=[\frac Dp]+1-\frac Dp$ whenever $\frac Dp$ is not an integer, or $\alpha$ is any positive number smaller than 1 otherwise.
\end{itemize}
\end{cor}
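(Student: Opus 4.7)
The plan is to reduce all three parts to the $k=1$ case---which is exactly Theorem \ref{sob} for (i), Theorem \ref{trudinger} for (ii), and Theorem \ref{morrey} for (iii)---and then bootstrap by iterating the weighted Sobolev embedding. The key structural fact used throughout is that if $u\in W^{k,p}_0(\Omega,x^Adx)$ then each partial derivative $D^\beta u$ with $|\beta|\le k-1$ lies in $W^{1,p}_0(\Omega,x^Adx)$, which is immediate from the closure definition of $W^{k,p}_0$ and lets the $k=1$ inequalities be applied to derivatives.

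For (i), I would induct on $k$. Given $u\in W^{k,p}_0$ with $kp<D$, the inductive hypothesis applied to $u$ and to each $\partial_i u$ (both in $W^{k-1,p}_0$, where $(k-1)p<D$) places $u,\nabla u\in L^{q_1}$ with $\tfrac{1}{q_1}=\tfrac{1}{p}-\tfrac{k-1}{D}$; the hypothesis $kp<D$ translates to $q_1<D$, so Theorem \ref{sob} applied to $u\in W^{1,q_1}_0$ yields $u\in L^q$ with $\tfrac{1}{q}=\tfrac{1}{q_1}-\tfrac{1}{D}=\tfrac{1}{p}-\tfrac{k}{D}$. Part (ii) with $k\ge 2$ follows directly from (i): since $(k-1)p=D-p<D$, part (i) applied to $u$ and each $\partial_i u$ places $u$ in $W^{1,D}_0$, and Theorem \ref{trudinger} together with a routine rescaling (absorbing $m(\Omega)$ into the Orlicz normalization) gives the embedding into $L^\varphi$. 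For (iii) with $D/p\notin\mathbb Z$, set $m=[D/p]$. Since $mp<D<(m+1)p$ and $k>D/p>m$, iterating Theorem \ref{sob} exactly $m$ times produces $u\in W^{k-m,p_m}_0$ with $p_m=D/(D/p-m)>D$; Theorem \ref{morrey} then places each $D^\beta u$ with $|\beta|\le k-m-1$ in $C^{0,\alpha}(\overline\Omega)$ with $\alpha=1-D/p_m=[D/p]+1-D/p$, which gives $u\in C^{r,\alpha}(\overline\Omega)$ with $r=k-m-1=k-[D/p]-1$.

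The main obstacle, and the only step requiring something beyond routine iteration, is case (iii) when $D/p$ is an integer, since the above iteration would stall at the critical exponent $p_m=D$ where Morrey fails. I would handle this by a perturbation argument: because $m(\Omega)=\int_\Omega x^A dx<\infty$, Hölder's inequality gives $L^p(\Omega,x^Adx)\subset L^{\tilde p}(\Omega,x^Adx)$, hence $W^{k,p}_0(\Omega,x^Adx)\subset W^{k,\tilde p}_0(\Omega,x^Adx)$, for every $\tilde p<p$. Choosing $\tilde p<p$ close enough to $p$ so that $D/\tilde p\notin\mathbb Z$ and $k\tilde p>D$, the non-integer case just proved applies with $[D/\tilde p]=D/p$, giving the correct differentiability order $r=k-[D/p]-1$ and Hölder exponent $\alpha_{\tilde p}=[D/\tilde p]+1-D/\tilde p\uparrow 1$ as $\tilde p\uparrow p$; hence any $\alpha\in(0,1)$ is attainable. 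A small auxiliary point is that Theorem \ref{morrey} supplies the Hölder bound \eqref{holder} only on $\R^n_*$, but this extends to all of $\overline\Omega$ by applying \eqref{holder} in each of the at most $2^n$ octants (using invariance of $x^A$ under each reflection $x_i\mapsto -x_i$) and exploiting continuity of the $C^1_c(\R^n)$ approximants across the coordinate hyperplanes.
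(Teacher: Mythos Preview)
Your proposal is correct and follows the approach the paper intends: the paper's own proof is a one-line citation of Theorems \ref{sob}, \ref{morrey}, and \ref{trudinger} together with the octant-reflection remark, and your bootstrap iteration is precisely the standard argument that makes that citation rigorous. Your treatment is in fact considerably more detailed than the paper's, including the handling of the borderline case $D/p\in\mathbb Z$ via perturbation to $\tilde p<p$, which the paper does not spell out.
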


The paper is organized as follows. In section 2 we give the proof of the weighted isoperimetric inequality.
Section 3 establishes the weighted Sobolev inequalities, while in section 4 we obtain their best constants and extremal functions.
Section 5 deals with the weighted Morrey inequality.
Finally, in section 6 we prove the weighted Trudinger inequality and Corollary \ref{embeddings}.

\section{Proof of the Isoperimetric inequality}\label{sec2}

In this section we prove the isoperimetric inequality with a monomial weight.
Our proof extends the one of the classical isoperimetric inequality due to the first author \cite{CSCM,CDCDS} (see also the last edition of \cite{Ch}).
In fact, setting $A=0$ in the following proof we obtain exactly the original one.
It is quite surprising (and fortunate) that this proof (which gives the best constant) can be adapted to the case of monomial weights.
A crucial fact in being able to obtain the sharp constant in the isoperimetric inequality is that
\[u(x)=|x|^2/2,\]
$x\in B_1\cap \R^n_*$, is the solution of
\begin{equation}\label{weak}
\left\{ \alignedat2 \div(x^A\nabla u) &= b_\Omega x^A
&\quad &\text{in } \Omega\\
x^A\frac{\partial u}{\partial\nu} &=x^A &\quad &\text{on }\partial \Omega ,
\endalignedat
\right.\end{equation}
for some constant $b_\Omega>0$ when $\Omega=B_1\cap \R^n_*$.

In a forthcoming paper \cite{CRS} we will use similar ideas to prove new sharp isoperimetric inequalities with homogeneous weights in open convex cones $\Sigma$ of $\R^n$.
We have already announced some of them in \cite{CRS-CRAS}.
Note that monomial weights are homogeneous functions in the convex cone $\Sigma=\R^n_*$.
In fact, the results in \cite{CRS} extend the present isoperimetric inequality with a monomial weight.

\begin{proof}[Proof of Theorem \ref{isoperimetric}.]
By symmetry, we can assume that $A=(A_1,...,A_k,0,...,0)$, with $A_i>0$ for $i=1,...,k$, where $0\leq k\leq n$.

Moreover, we can also suppose that $\Omega$ is contained in $\mathbb R^n_*$.
Indeed, split the domain $\Omega$ in at most $2^k$ disjoint subdomains $\Omega_j$, $j=1,...,J$, each one of them contained in the cone $\{\epsilon_ix_i>0,\ i=1,...,k\}$ for different $\epsilon_i\in\{-1,1\}$, and with $\overline\Omega=\overline\Omega_1\cup\cdots\cup\overline\Omega_J$.
Then, since the weight is zero on $\{x_i=0\}$ for each $i=1,...,k$, we have that $P(\Omega)=\sum_{j=1}^J P(\Omega_j)$ and $m(\Omega)=\sum_{j=1}^J m(\Omega_j)$.
Therefore
\[\frac{P(\Omega)}{m(\Omega)^{\frac{D-1}{D}}} \geq \min_{1\leq j\leq J}\left\{\frac{P(\Omega_j)}{m(\Omega_j)^{\frac{D-1}{D}}}\right\}=:\frac{P(\Omega_{j_0})}{m(\Omega_{j_0})^{\frac{D-1}{D}}},\]
with strict inequality unless $J=1$.
After some reflections, we may assume that $\Omega_{j_0}\subset \R^n_*$.
Moreover, since $\Omega_{j_0}$ is the intersection of a Lipschitz domain of $\R^n$ with $\R^n_*$, $\Omega_{j_0}$ can be approximated in weighted area and perimeter by smooth domains $\Omega_\varepsilon$ with $\overline\Omega_\varepsilon\subset \Omega_{j_0}\subset\R^n_*$.

Therefore, from now on we assume:
\[\Omega\ \textrm{is smooth and}\ \overline\Omega\subset \mathbb R^n_*.\]
In particular, $x^A\geq c$ in $\overline \Omega$ for some positive constant $c$.

Let $u$ be a solution of the Neumann problem
\begin{equation}
\left\{ \alignedat2 \div(x^A\nabla u) &= b_\Omega x^A
&\quad &\text{in } \Omega\\
\frac{\partial u}{\partial\nu} &=1 &\quad &\text{on }\partial \Omega ,
\endalignedat
\right. \label{eqsem}
\end{equation}
where the constant $b_\Omega$ is chosen so that the problem has a unique solution up to an additive constant, i.e.,
\begin{equation}\label{cttb}
b_\Omega=\frac{P(\Omega)}{m(\Omega)}.\end{equation}
Since the equation in \eqref{eqsem},
\begin{equation}
x^{-A}\div(x^A\nabla u) = \Delta u+\frac{A_1}{x_1}u_{x_1}+\cdots+\frac{A_n}{x_n}u_{x_n}=b_\Omega\label{eqn}
\end{equation}
is uniformly elliptic in $\Omega$, $u$ is smooth in $\overline \Omega$.
The $C^{1,1}$ regularity of $u$ up to $\overline\Omega$ will be crucial in the rest of the proof.

The following comment is not necessary to complete the proof, but it is useful to notice it here.
Problem \eqref{eqsem} is equivalent to \eqref{weak} since $\partial\Omega\subset\R^n_*$.
At the same time, when $\Omega=B_1^*=B_1\cap\R^n_*$ the solution to \eqref{weak} is given by $u(x)=|x|^2/2$, and we will have that all inequalities in the rest of the proof are equalities
for $\Omega=B_1^*$ (see Remark \ref{remparabola} for more details).

Coming back to the solution $u$ of \eqref{eqsem}, consider the lower contact set of $u$, defined by
\[\Gamma_u =\{ x \in \Omega \ : \ u(y) \ge u(x) + \nabla u (x) \cdot (y-x)\  \text{ for all } y \in \overline \Omega \}.\]
It is the set of points where the tangent hyperplane to the graph of $u$ lies below $u$ in all $\overline \Omega$.
Define also
\[\Gamma_u^*=\{x\in\Gamma_u\,:\, u_{x_1}(x)>0,...,u_{x_k}(x)>0\}=\Gamma_u\cap (\nabla u)^{-1}(\R^n_*).\]
We claim that
\begin{equation} \label{claim}
B_1^*\subset \nabla u (\Gamma_u^*),
\end{equation}
where $B_1^*=B_1(0)\cap \R^n_*$.

To show \eqref{claim}, take any $p\in \mathbb R^n$ satisfying $\vert p \vert <1$. Let $x\in \overline \Omega$ be a point such that
$$
\min_{y\in \overline \Omega} \,\{ u(y) -p\cdot y \} = u(x)-p\cdot x
$$
(this is, up to a sign, the Legendre transform of $u$).
If $x\in \partial \Omega$ then the exterior normal derivative of $u(y)-p\cdot y$ at $x$
would be nonpositive and hence $(\partial u /\partial\nu) (x) \leq p\cdot\nu\leq |p|<1$, a contradiction with \eqref{eqsem}.
It follows that $x\in
\Omega$ and, therefore, that $x$ is an interior minimum of the function $u(y)-p\cdot y$.
In particular, $p=\nabla u (x)$ and $x\in \Gamma_u$.
Thus $B_1\subset \nabla u(\Gamma_u)$.
Intersecting now both sides of this inclusion with $\R^n_*$, claim \eqref{claim} follows.
It is interesting to visualize geometrically the proof of the claim \eqref{claim}, by considering the graphs of the
functions $p\cdot y + c$ for $c\in \mathbb R$. These are parallel hyperplanes which lie, for $c$ close to $-\infty$, below the graph of $u$.
We let $c$ increase and consider the first $c$ for which there is contact or ``touching'' at a point $x$.
It is clear geometrically that
$x\not\in\partial\Omega$, since $\vert p\vert <1$ and $\partial u /\partial\nu =1$ on $\partial\Omega$.

Now, from \eqref{claim} we deduce
\begin{equation}\begin{split}
m(B_1^*) &\leq \int_{\nabla u (\Gamma_u^*)}p^A dp \leq \int_{\Gamma_u^*}(\nabla u(x))^A \det D^2u (x)dx\\
&=\int_{\Gamma_u^*}\frac{(\nabla u(x))^A}{x^A}\det D^2u (x)x^Adx. \label{ineq}\end{split}
\end{equation}
We have applied the area formula to the smooth map $\nabla u : \Gamma_u^* \rightarrow \mathbb R^n$, and we have used that its Jacobian, $\det D^2u$, is
nonnegative in $\Gamma_u$ by definition of this set.

We use now the weighted version of the arithmetic-geometric mean inequality,
\begin{equation}\label{weightedAM-GM}w_1^{\lambda_1}\cdots w_m^{\lambda_m}\leq \left(\frac{\lambda_1w_1+\cdots+\lambda_mw_m}{\lambda_1+\cdots+\lambda_m}\right)^{\lambda_1+\cdots+\lambda_m}.\end{equation}
Here $\lambda_i$ and $w_i$ are arbitrary nonnegative numbers.
To prove this inequality, take logarithms on both sides and use the concavity of the logarithm.
We apply \eqref{weightedAM-GM} to the numbers $w_i=u_{x_i}/x_i$ and $\lambda_i=A_i$ for $i=1,...,k$, and to the eigenvalues of
$D^2u(x)$ and $\lambda_j=1$ for $j=k+1,...,k+n$.
They are all nonnegative when $x\in \Gamma_u^*$.
We obtain
\[\left(\frac{u_{x_1}}{x_1}\right)^{A_1}\cdots\left(\frac{u_{x_k}}{x_k}\right)^{A_k}\det D^2u\leq
\left(\frac{A_1\frac{u_{x_1}}{x_1}+\cdots+A_k\frac{u_{x_k}}{x_k}+\Delta u}{A_1+\cdots+A_k+n}\right)^{A_1+\cdots+A_k+n}\ \ \textrm{in}\ \Gamma_u^*.\]

This, combined with \eqref{eqn}
\[A_1\frac{u_{x_1}}{x_1}+\cdots+A_k\frac{u_{x_k}}{x_k}+\Delta u=\frac{\div(x^A\nabla u)}{x^A} \equiv b_\Omega,\]
yields
\[\int_{\Gamma_u^*}\frac{(\nabla u(x))^A}{x^A}\det D^2u (x)x^Adx\leq \int_{\Gamma_u^*}\left(\frac{b_\Omega}{D}\right)^Dx^A dx.\]
Therefore, by \eqref{ineq} and \eqref{cttb},
\[m(B_1^*) \leq \left( \frac{P(\Omega)} {Dm(\Omega)} \right)^D m(\Gamma_u^*) \leq \left( \frac{P(
\Omega)} {Dm( \Omega )} \right)^D m(\Omega).\]
Thus, we conclude that
\begin{equation}
Dm(B_1^*)^{\frac{1}{D}} \leq \frac{P(\Omega)}{m(\Omega)^{\frac{D-1}{D}}}.\label{isopfin}
\end{equation}

Finally, an easy computation --- using that $|x|^2/2$ solves \eqref{weak} with $b_\Omega=D$ in $\Omega=B_1^*$ --- gives $P(B_1^*) = Dm(B_1^*)$.
Thus,
\begin{equation}\label{eqmp}Dm(B_1^*)^{\frac{1}{D}}=P(B_1^*)/m(B_1^*)^{\frac{D-1}{D}}\end{equation}
and the isoperimetric inequality \eqref{isop} follows.
\end{proof}

\begin{rem}\label{remparabola}
An alternative (and more instructive) way to finish the proof goes as follows.
When $\Omega=B_1^*$ we consider $u(x)=|x|^2/2$ and $\Gamma_u=B_1^*$.
Now, $\partial u/\partial\nu=1$ is only satisfied on $\R^n_*\cap \partial\Omega$ but, since $x^A\equiv 0$ on $\partial \R^n_*\cap \partial\Omega$, we have $b_{B_1^*} = P(B_1^*)/m(B_1^*)$ --- as in  \eqref{cttb}.
This is because $|x|^2/2$ solves problem $\div(x^A\nabla u)=b_\Omega x^{A}$ in $\Omega$, $x^Au_\nu=x^A$ on $\partial\Omega$ for $\Omega=B_1^*$.
For these concrete $\Omega$ and $u$ one verifies that all inequalities in the proof are equalities, and therefore from \eqref{isopfin} we deduce the isoperimetric inequality \eqref{isop}.
\end{rem}

\begin{rem}\label{grig} The fact that $P(\Omega)/m(\Omega)^{\frac{D-1}{D}}\geq c$ for some nonoptimal constant $c$ is an interesting consequence of the following result of A. Grigor'yan \cite{G} (see also \cite{M2}).

We say that a manifold $M$ satisfies the $m$-isoperimetric inequality if there exists a positive constant $c$ such that $\mu(\partial\Omega)\geq c\mu(\Omega)^{\frac{m-1}{m}}$ for each $\Omega \subset M$.
In \cite{G}, he proved that if $M_1$ and $M_2$ are manifolds that satisfy the $m_1$-isoperimetric and $m_2$-isoperimetric inequalities, respectively, then the product manifold $M_1\times M_2$ satisfies the $(m_1+m_2)$-isoperimetric inequality. By applying this result to $M_i=(\mathbb R,x_i^{A_i}dx_i)$, this allows us to reduce the problem to $n=1$, and in this case the isoperimetric inequality is easy to verify.
\end{rem}

\section{Weighted Sobolev inequality}
\label{sec3}

The aim of this section is to prove the Sobolev inequality with a monomial weight, that is, part (a) of Theorem \ref{sob}.

As in the classical inequality in $\mathbb R^n$, we can deduce any weighted Sobolev inequality from the isoperimetric inequality with the same weight via the coarea formula.
Moreover, if the isoperimetric inequality has the sharp constant then this procedure gives the optimal constant for the Sobolev inequality when the exponent is $p=1$ (see the following proof and also Remark \ref{optconst1}).
This classical argument is valid even on Riemannian manifolds; see for example \cite{Ch}.
We use it to prove part (a) of Theorem \ref{sob}.

\begin{proof}[Proof of Theorem \ref{sob} (a)] We prove first the case $p=1$.
By density arguments, we can assume $u\geq0$ and also $u\in C^\infty_c(\R^n)$.
Moreover, by approximation we can suppose $u\in C^\infty_c(\R^n_*)$.
Indeed, consider $\tilde u_\varepsilon= u\eta_\varepsilon$, where $\eta_\varepsilon\in C^\infty_c(\R^n_*)$ is a function satisfying $\eta_\varepsilon\equiv1$ in the set $\{x_i>\varepsilon\ \textrm{whenever}\ A_i>0\}$ and $|\nabla \eta_\varepsilon|\leq C/\varepsilon$.
Then, it is clear that
\[\|u\eta_\varepsilon\|_{L^{\frac{D}{D-1}}(\R^n_*,x^Adx)}\longrightarrow \|u\|_{L^{\frac{D}{D-1}}(\R^n_*,x^Adx)}\]
as $\varepsilon\rightarrow0$.
Moreover,
\[ \|\nabla \eta_\varepsilon\|_{L^1(\R^n_*,x^Adx)}\leq \sum_{A_i>0}\int_{ \{0\leq x_i\leq \varepsilon\} }\frac{C}{\varepsilon}x^Adx\leq \sum_{A_i>0} C\varepsilon^{A_i}\longrightarrow 0,\]
and thus
\[\|\nabla(u \eta_\varepsilon)\|_{L^1(\R^n_*,x^Adx)}\longrightarrow \|\nabla u\|_{L^1(\R^n_*,x^Adx)}.\]

Thus, we now have $u\in C^\infty_c(\R^n_*)$.
For each $t\geq0$, define
\[\{u>t\}:=\{x\in\R^n_*\,:\, u(x)>t\}\ \ \ \textrm{and}\ \ \ \{u=t\}:=\{x\in\R^n_*\,:\, u(x)=t\}.\]
By Theorem \ref{isoperimetric} and Sard's Theorem, we have
\begin{equation}\label{isoput}
m(\{u>t\})^{\frac{D-1}{D}}\leq C_1P(\{u>t\})=C_1\int_{\{u=t\}}x^Ad\sigma
\end{equation}
for almost all $t$ (those $t$ for which $\{u=t\}$ is smooth).
Here, $C_1$ is the optimal constant in \eqref{isop}, i.e., recalling \eqref{eqmp}
\begin{equation}\label{cttC1}
C_1=\frac{P(B_1^*)}{m(B_1^*)^{\frac{D-1}{D}}}=Dm(B_1^*)^{\frac1D}.
\end{equation}

Letting $\chi_A$ be the characteristic function of the set $A$, we have
\[u(x)=\int_0^{+\infty}\chi_{\{u(x)>\tau\}}d\tau.\]

Thus, by Minkowski's integral inequality
\begin{eqnarray*}\left(\int_{\R^n_*}x^Au^{\frac{D}{D-1}}dx\right)^{\frac{D-1}{D}}&\leq&
 \int_0^{+\infty}\left(\int_{\R^n_*}\chi_{\{u(x)>\tau\}}x^Adx\right)^{\frac{D-1}{D}}d\tau\\
 &=&\int_0^{+\infty}m(\{u>\tau\})^{\frac{D-1}{D}}d\tau.\end{eqnarray*}
Inequality \eqref{isoput}, together with the coarea formula, yield
\[\left(\int_{\R^n_*}x^Au^{\frac{D}{D-1}}dx\right)^{\frac{D-1}{D}}\leq c_0\int_0^{+\infty}\int_{\{u=t\}}x^Ad\sigma\,d\tau=c_0\int_{\R^n_*}x^A|\nabla u|dx,\]
and the theorem is proved for $p=1$.

It remains to prove the case $1<p<D$.
Take $u\in C^1_c(\R^n)$, and define $v=|u|^{\gamma}$, where $\gamma=\frac{p_*}{1_*}$. Since, $\gamma>1$, we have $v\in C^1_c(\R^n)$, and we can apply the weighted Sobolev inequality with exponent $p=1$ (proved above) to get
\[\left(\int_{\R^n_*}x^A|u|^{p_*}dx\right)^{1/1_*}=\left(\int_{\R^n_*}x^A|v|^{\frac{D}{D-1}}dx\right)^{\frac{D-1}{D}}\leq c_0\int_{\R^n_*}x^A|\nabla v|dx.\]
Now, $|\nabla v|=\gamma |u|^{\gamma-1}|\nabla u|$, and by H\"older's inequality we deduce
\[\int_{\R^n_*}x^A|\nabla v|dx\leq C\left(\int_{\R^n_*}x^A|\nabla u|^pdx\right)^{1/p}\left(\int_{\R^n_*}x^A|u|^{(\gamma-1)p'}dx\right)^{1/p'}.\]
Finally, from the definition of $\gamma$ and $p_*$ it follows that
\[\frac{1}{1_*}-\frac{1}{p'}=\frac{1}{p_*},\qquad (\gamma-1)p'=p_*,\]
and hence,
\[\left(\int_{\R^n_*}x^A|u|^{p_*}dx\right)^{1/p_*}\leq C\left(\int_{\R^n_*}x^A|\nabla u|^{p}dx\right)^{1/p}.\]
\end{proof}

\begin{rem}\label{optconst1}
Since the constant appearing in \eqref{isoput} is optimal, this proof gives the optimal constant for the weighted Sobolev inequality for $p=1$.
This is because for each Lipschitz open set $E$ there exists an increasing sequence of smooth functions $u_\varepsilon\rightarrow\chi_E$, such that
$\|\nabla u_\varepsilon\|_{L^1(\R^n_*,x^Adx)}\rightarrow P(E)$.

Moreover, for $p=1$ it follows from the previous proof (in fact from the use of Minkowski's inequality) that if equality is attained by a function $u$, then all the sets $\{u>t\}$ must coincide for $t\in (0,\max u)$.
That is, the extremal function must be a characteristic function.
This proves that the optimal constant is not attained by any $W^{1,1}_0(\R^n,x^Adx)$ function for $p=1$.
\end{rem}

We give now an alternative and short proof of part (a) of Theorem \ref{sob} --- without best constant --- under some additional assumptions.
Indeed, under the hypotheses $A_i>0$ for all $i$ and $u_{x_i}\leq0$ in $\{x_i>0,\ i=1,...,n\}$, we establish the weighted Sobolev inequality \eqref{sobolev} following the ideas used in \cite{CR} to prove the isoperimetric inequality in dimension $n=2$ (without best constant) with the weight $\sigma^a\tau^b$.
The following proof is much more elementary than the previous one, which used the weighted isoperimetric inequality.
It does not use any elliptic problem nor the coarea formula, and it is also shorter.
However, it does not give the best constant in the inequality, even for $p=1$.
The monotonicity hypotheses $u_{x_i}\leq0$ in $\{x_i>0,\ i=1,...,n\}$ are equivalent to \eqref{assumption} in Proposition \ref{sobsigmatau}.
As said before, the weighted Sobolev inequality under these monotonicity assumptions suffices for some applications to nonlinear problems.

\begin{prop} Let $A$ be a positive vector in $\mathbb R^n$ and $1\leq p< D$ be a real number.
Then, there exists a constant $C$ such that for all $u\in C^{1}_c(\mathbb R^n)$ satisfying
\begin{equation}\label{ass}
u_{x_i}\leq0\ \textrm{ in }\ (\R_+)^n\ \textrm{ for }\ i=1,...,n,
\end{equation}
we have
\begin{equation*}\label{sobolevnonoptimal}
\left(\int_{(\R_+)^n}x^A|u|^{p_*}dx\right)^{1/p_*}\leq C\left(\int_{(\R_+)^n}x^A|\nabla u|^pdx\right)^{\frac1p},\end{equation*}
where $p_*=\frac{pD}{D-p}$ and $D=A_1+\cdots+A_n+n$.
\end{prop}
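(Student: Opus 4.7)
I would adapt the Gagliardo--Nirenberg argument to the weighted setting, using the monotonicity of $u$ to incorporate the weight $x^A$ into a pointwise estimate, and then reduce the case of general $p$ to the case $p=1$ by the substitution $v=|u|^\gamma$ with $\gamma=p_*/1_*$, exactly as in the proof of Theorem \ref{sob}(a); the monotonicity of $u$ passes to $v$.

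\emph{Pointwise bound from monotonicity.} For $x\in(\R_+)^n$ and each $i=1,\dots,n$, the conditions $u\in C^1_c(\R^n)$ and $u_{x_i}\le 0$ give $u(x)=\int_{x_i}^\infty |u_{x_i}(\hat x_i,t)|\,dt$. Multiplying by $x_i^{A_i}$ and using $t^{A_i}\ge x_i^{A_i}$ for $t\ge x_i\ge 0$ yields
\[x_i^{A_i}\,u(x)\le \int_0^\infty t^{A_i}|u_{x_i}(\hat x_i,t)|\,dt =: F_i(\hat x_i),\]
and taking the product over $i$ gives $x^A u(x)^n\le \prod_{i=1}^n F_i(\hat x_i)$, with each $F_i$ a function of only the $n-1$ variables $\hat x_i$.

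\emph{Proof for $p=1$.} Beyond the pointwise bound, the key ingredient is the integration-by-parts identity
\[\int_{(\R_+)^n} x^A u^q\,dx=\frac{q}{A_i+1}\int_{(\R_+)^n} x^A x_i u^{q-1}|u_{x_i}|\,dx, \qquad i=1,\dots,n,\]
valid because $x^A\equiv 0$ on $\{x_i=0\}$ (using $A_i>0$) and $u$ has compact support. Taking the convex combination of these $n$ identities with weights $(A_i+1)/D$, which sum to $1$, yields
\[\int x^A u^q\,dx=\frac{q}{D}\int x^A u^{q-1}\Bigl(\sum_{i=1}^n x_i|u_{x_i}|\Bigr)\,dx,\]
which is the point at which the effective dimension $D=n+\sum A_i$ enters. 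Applying H\"older's inequality to the right-hand side---and using the pointwise bounds $x_i^{A_i}u\le F_i$ to control the factor $x_i u^{q-1}$ at the critical exponent $q=1_*=D/(D-1)$---should produce $\|u\|_{L^{1_*}(x^A)}\le C\|\nabla u\|_{L^1(x^A)}$. As a simpler template, raising the pointwise bound to the $1/(n-1)$ power and applying the Loomis--Whitney inequality yields the intermediate estimate
\[\int (x^A)^{1/(n-1)} u^{n/(n-1)}\,dx \le C\prod_{i=1}^n\Bigl(\int x_i^{A_i}|u_{x_i}|\,dx\Bigr)^{1/(n-1)},\]
a weighted Sobolev-type inequality with the wrong weight $(x^A)^{1/(n-1)}$ and the wrong exponent $n/(n-1)$; the integration-by-parts identities above are what let us upgrade from this template to the target weight $x^A$ and exponent $D/(D-1)$.

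\emph{Main obstacle.} The subtle point is arranging the H\"older coordination so that the target exponent $p_*$ emerges together with the weight $x^A$. The direct Loomis--Whitney step yields only $n/(n-1)$ with weight $(x^A)^{1/(n-1)}$, and the missing contributions to the effective dimension $D$ from each $A_i$ must be recovered from the integration-by-parts identities weighted by $(A_i+1)/D$. Monotonicity is essential throughout: it is what makes the pointwise bound $x_i^{A_i}u\le F_i$ available, playing the role of the classical Gagliardo--Nirenberg bound $|u|\le\int |u_{x_i}|\,dt$ in this weighted setting, and what allows $\sum_i x_i|u_{x_i}|=-x\cdot\nabla u$ to be controlled by $|x|\,|\nabla u|$ in the final H\"older step.
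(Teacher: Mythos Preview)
Your proposal is not a proof: you explicitly leave the key step open. You correctly derive the pointwise bound $x_i^{A_i}u\le F_i(\hat x_i)$ and the integration-by-parts identity
\[
\int x^A u^{q}\,dx=\frac{q}{D}\int x^A u^{q-1}\Bigl(\sum_i x_i|u_{x_i}|\Bigr)\,dx,
\]
but then say only that a suitable H\"older step ``should produce'' the inequality, and you identify as the ``main obstacle'' precisely the coordination needed to make the exponent $D/(D-1)$ and the weight $x^A$ come out simultaneously. That obstacle is real: the Loomis--Whitney route you describe gives the wrong weight $(x^A)^{1/(n-1)}$ and the wrong exponent $n/(n-1)$, and nothing in your write-up explains how to upgrade from that to the target. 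In fact the paper itself observes (in the remark following this proposition) that the Gagliardo--Nirenberg scheme does not seem to adapt to yield the weighted Sobolev inequality with weight $x^A$ on both sides; what one gets instead is an inequality with $x^{\frac{n-1}{n}A}$ on the gradient side.

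The paper's argument avoids Gagliardo--Nirenberg entirely and is short. Integrating by parts in each $x_i$ (this time differentiating $u$, not $x_i^{A_i+1}$) gives
\[
\int x^A|\nabla u|\,dx\ \ge\ \frac{1}{K}\int x^A u\Bigl(\frac{1}{x_1}+\cdots+\frac{1}{x_n}\Bigr)\,dx,\qquad K=\frac{\sqrt n}{\min_i A_i}.
\]
Then comes the one nontrivial idea, which uses monotonicity in a different way than you do: define $\lambda>0$ by $\int x^A u^{D/(D-1)}\,dx=b\lambda^D$ with $b=\int_{\{0\le x_i\le 1\}}x^A\,dx$, and show by contradiction that for every $x$ there is some $i$ with $u(x)^{1/(D-1)}\le \lambda/x_i$. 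Indeed, if $u(y)^{1/(D-1)}>\lambda/y_i$ for all $i$, then monotonicity forces $\int_{\{0\le x_i\le y_i\}} x^A u^{D/(D-1)}\,dx>b\lambda^D$, a contradiction. This pointwise bound gives $u^{1/(D-1)}\le \lambda\sum_i 1/x_i$, whence
\[
\int x^A u^{D/(D-1)}\,dx\le \lambda\int x^A u\sum_i\frac{1}{x_i}\,dx\le \lambda K\int x^A|\nabla u|\,dx,
\]
and substituting the value of $\lambda$ closes the estimate. The reduction from general $p$ to $p=1$ via $v=|u|^{p_*/1_*}$ is the same as yours.
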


\begin{proof} It suffices to prove the case $p=1$, since the inequality for $1<p<D$ follows from it by applying H\"older's inequality --- see the previous proof of Theorem \ref{sob} (a).

From assumption \eqref{ass}, we deduce $u\geq0$ in $(\R_+)^n$.
Now, integrating by parts we have
\[\begin{split}
\int_{(\R_+)^n}x^A(|u_{x_i}|+\cdots+|u_{x_n}|)dx&=-\int_{(\R_+)^n}x^A(u_{x_1}+\cdots+u_{x_n})dx\\
&=\int_{(\R_+)^n} x^Au\left(\frac{A_1}{x_1}+\cdots+\frac{A_n}{x_n}\right)dx,\end{split}\]
and thus
\begin{equation}\label{r1}
\int_{(\R_+)^n} x^Au\left(\frac{1}{x_1}+\cdots+\frac{1}{x_n}\right)dx\leq K\int_{(\R_+)^n} x^A|\nabla u|dx,\end{equation}
where $K=\sqrt n/\min_i A_i$.

Let now $\lambda>0$ be such that
\[\int_{(\R_+)^n}x^A u^{\frac{D}{D-1}}dx=b\lambda^D,\]
where $b=\int_{\{0\leq x_i\leq 1\}} x^{A}dx$.
Here $\{0\leq x_i\leq 1\}=\{x\in\R^n\,:\, 0\leq x_i\leq 1\ \textrm{for}\ i=1,...,n\}$.

We claim that, for each $x\in(\R_+)^n$ we have $u(x)^{\frac{1}{D-1}}\leq \frac{\lambda}{x_i}$ for some $i\in\{1,...,n\}$.
Indeed, otherwise there would exist
$y\in(\R_+)^n$ such that $u(y)^{\frac{1}{D-1}}>\frac{\lambda}{y_i}$ for each $i$, and therefore
\[u(y)^{\frac{D}{D-1}}>\frac{\lambda^D}{y^{A+1}},\]
where $A+1=A+(1,...,1)=(A_1+1,...,A_n+1)$.
But, by \eqref{ass}, $u(x)\geq u(y)$ if $0\leq x_i\leq y_i$ for all $i=1,...,n$.
We deduce
\[\int_{\{0\leq x_i\leq y_i\}}x^Au(x)^{\frac{D}{D-1}}dx>\lambda^D
\int_{\{0\leq x_i \leq y_i\}}x^Ay^{-A-1}dx=\lambda^D\int_{\{0\leq z_i\leq 1\}}z^Adz
=b\lambda^D,\]
a contradiction.

Hence,
\[u(x)^{\frac{1}{D-1}}\leq \lambda\left(\frac{1}{x_1}+\cdots+\frac{1}{x_n}\right)\qquad\textrm{in}\ (\R_+)^n,\]
and therefore
\begin{equation}\label{r2}
\int_{(\R_+)^n} x^Au^{\frac{D}{D-1}}dx\leq \lambda \int_{(\R_+)^n} x^Au\left(\frac{1}{x_1}+\cdots+\frac{1}{x_n}\right)dx.\end{equation}
Finally, taking into account the value of $\lambda$
\[\lambda=b^{-\frac1D}\left(\int_{(\R_+)^n} x^Au^{\frac{D}{D-1}}dx\right)^{\frac1D},\]
we deduce from \eqref{r2} and \eqref{r1} that
\[\begin{split}\left(\int_{(\R_+)^n} x^Au^{\frac{D}{D-1}}dx\right)^{\frac{D-1}{D}}&\leq b^{-\frac1D}\int_{(\R_+)^n} x^Au\left(\frac{1}{x_1}+\cdots+\frac{1}{x_n}\right)dx\\
&\leq K b^{-\frac1D}\int_{(\R_+)^n} x^A|\nabla u|dx.\end{split}\]
This completes the proof and gives as constant $Kb^{-\frac1D}$, computed explicitly within the proof.
\end{proof}

This proof can not be used to establish the classical Sobolev inequality.
Indeed, the constant on the right hand side blows up as $A_i\rightarrow0$ for some $i$.
It is surprising that the above proof of the Sobolev inequality with the monomial weight $x^A$, $A>0$, seems more elementary than those of the classical Sobolev without weight.

The following remark justifies our assumption $A\geq0$ in the weighted Sobolev inequality \eqref{sobolev}.
It is related to the monotonicity assumption \eqref{assumption} in Proposition \ref{sobsigmatau}.

\begin{rem}\label{remass}
When $a<0$ or $b<0$ inequality \eqref{sobsigmatau} is not valid without the monotonicity assumption \eqref{assumption}.
To prove it, we only need to take functions $u$ with support away from the origin, as follows.
Assume that $a<0$, $a+b>0$ (and thus $b>0$), and that \eqref{sobsigmatau} holds for functions $u$ with support in the ball $B_1(x_0)$, with $x_0=(2,0)$.
Then, since $\sigma^a$ is bounded in this ball from above and below by positive constants, the same inequality holds --- with a larger constant $C$ --- with the weight $\sigma^a\tau^b$ replaced by $\tau^b$.
But, since $a<0$, we have $q':=\frac{2D'}{D'-2}<\frac{2D}{D-2}$, where $D'=b+2$.
This is a contradiction with the fact that the exponent $q'$ is optimal for the weight $\tau^b$ (which can be seen by a scaling argument, i.e., considering the rescaled functions $u_{\lambda}(x)=u(x_0+\lambda(x-x_0))$, with $\lambda\geq1$).
Of course, when $a$ and $b$ are both nonnegative this argument does not work.
\end{rem}

\begin{rem}
One can think on adapting the classical proof of the Sobolev inequality by Gagliardo and Nirenberg (see for example \cite{E}) to the case of monomial weights.
As we show next, this leads to inequality
\begin{equation}\label{evans}
\left(\int_{\mathbb R^n}x^{A}|u|^{\frac{n}{n-1}}dx\right)^{\frac{n-1}{n}}\leq \int_{\mathbb R^n}x^{\frac{n-1}{n}A}|\nabla u|dx,
\end{equation}
but not to our Sobolev inequality \eqref{sobolev} with the same weight $x^A$ in both integrals.
The constant $C$ (which does not appear) on the right hand side equals $1$.
To prove \eqref{evans}, one shows first that
\begin{equation}\label{evans2}
|x_i|^{\frac{n-1}{n}A_i}|u(x)|\leq \int_{\mathbb R}|y_i|^{\frac{n-1}{n}A_i}|\nabla u(x_1,...,x_{i-1},y_i,x_{i+1},...,x_n)|dy_i.
\end{equation}
This follows by integrating $u_{y_i}$ on $(x_i,+\infty)$ if $x_i>0$ and on $(-\infty,x_i)$ if $x_i<0$, and using $|x_i|\leq|y_i|$ in these halflines.
Then, \eqref{evans2} yields
\[|x_1|^{\frac{A_1}{n}}\cdots|x_n|^{\frac{A_n}{n}}|u(x)|^{\frac{n}{n-1}}\leq \prod_{i=1}^n\left(\int_{-\infty}^{+\infty}|\nabla u(x_1,...,y_i,...,x_n)||y_i|^{\frac{n-1}{n}A_i}dy_i\right)^{\frac{1}{n-1}}.\]
Integrating both sides with respect to the measure $x^{\frac{n-1}{n}A}dx$ we deduce
\[\int_{\R^n}x^A|u(x)|^{\frac{n}{n-1}}dx\leq \int_{\R^n}\prod_{i=1}^n\left(\int_{-\infty}^{+\infty}|\nabla u(x_1,...,y_i,...,x_n)||y_i|^{\frac{n-1}{n}A_i}dy_i\right)^{\frac{1}{n-1}}x^{\frac{n-1}{n}A}dx,\]
and the proof of \eqref{evans} is completed in the same way as the classical one with the measures $dx_i$ and $dy_i$ replaced by $d\mu_i(x_i)=|x_i|^{\frac{n-1}{n}A_i}dx_i$ and $d\mu_i(y_i)=|y_i|^{\frac{n-1}{n}A_i}dy_i$.

Different from \eqref{sobolev}, inequality \eqref{evans} is the Sobolev inequality for the Riemannian manifold conformal to $\R^n$ with conformal factor $g=x^A$.
Indeed, the Riemannian gradient in $\mathbb R^n$ with this metric is given by $\nabla_R u=x^{-\frac An}\nabla u$, and hence it holds
\[x^{\frac{n-1}{n}A}|\nabla u|=x^A|\nabla_R u|.\]
Moreover, from this Sobolev inequality one can deduce the following isoperimetric inequality (with nonoptimal constant) on this manifold
\[\left(\int_\Omega x^Adx\right)^{\frac{n-1}{n}}\leq\int_{\partial\Omega}x^{\frac{n-1}{n}A}d\sigma.\]
\end{rem}

To end this section, we give an immediate consequence of Theorem \ref{sob}.
Recall that in \cite{CR} we wanted to prove inequality \eqref{ddf} for $n=2$ and that, after a change of variables, we saw that it is equivalent to the Sobolev inequality \eqref{sobolev} with a monomial weight.

\begin{cor}\label{cor} Let $\alpha_1,...,\alpha_n$ be real numbers such that $\alpha_i\in[0,1)$.
There exists a constant $C$ such that for all $u\in C^1_c(\mathbb R^n)$,
\begin{equation}\label{ddf}
\left(\int_{\mathbb R^n_*}|u|^{p_*}dx\right)^{\frac{1}{p_*}}\leq C\left(\int_{\mathbb R^n_*}\bigl\{|x_1|^{p\alpha_1}|u_{x_1}|^p+\cdots+|x_n|^{p\alpha_n}|u_{x_n}|^p\bigr\}dx\right)^{\frac1p},\end{equation}
where $p_*=\frac{pD}{D-p}$ and $D=n+\frac{\alpha_1}{1-\alpha_1}+\cdots+\frac{\alpha_n}{1-\alpha_n}$.
\end{cor}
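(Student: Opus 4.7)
The strategy is to deduce \eqref{ddf} directly from Theorem \ref{sob} via the change of variables $x_i=y_i^{\gamma_i}$ with a carefully chosen exponent $\gamma_i$. The choice is dictated by the requirement that the weight $|x_i|^{p\alpha_i}|u_{x_i}|^p\,dx$ transform into a purely monomial weight in $y$ multiplying $|v_{y_i}|^p\,dy$.

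\textbf{Step 1: parameter matching.} Set $\gamma_i=\tfrac{1}{1-\alpha_i}\ge 1$ and $A_i=\gamma_i-1=\tfrac{\alpha_i}{1-\alpha_i}\ge 0$. Then $A_i>0$ exactly when $\alpha_i>0$, so the set $\mathbb R^n_*$ attached to the weight $x^A$ via \eqref{R*} agrees with the domain of integration in \eqref{ddf}. Also, $D:=n+\sum A_i=n+\sum\tfrac{\alpha_i}{1-\alpha_i}$, which matches the $D$ stated in the corollary, so the critical exponent $p_*=pD/(D-p)$ is preserved.

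\textbf{Step 2: transport of integrals.} After reflections, it suffices to consider $u\in C^1_c(\mathbb R^n)$ supported in one closed orthant; I may assume the positive one. For $y\in(\mathbb R_+)^n$ define $v(y):=u(y_1^{\gamma_1},\dots,y_n^{\gamma_n})$. Since $\gamma_i\ge 1$ and $u$ has compact support, $v$ is Lipschitz with compact support. The Jacobian computation gives
\[
dx=\Bigl(\prod_j\gamma_j\Bigr)\prod_j y_j^{\gamma_j-1}\,dy=c_\gamma\,y^A\,dy,\qquad u_{x_i}=\frac{v_{y_i}}{\gamma_i y_i^{\gamma_i-1}}.
\]
Hence
\[
\int|u|^{p_*}\,dx=c_\gamma\int|v|^{p_*}\,y^A\,dy,
\]
and a term of the gradient norm becomes
\[
\int|x_i|^{p\alpha_i}|u_{x_i}|^p\,dx=c_\gamma\gamma_i^{-p}\int y_i^{p\alpha_i\gamma_i-p(\gamma_i-1)}\,y^A|v_{y_i}|^p\,dy.
\]
The exponent $p\alpha_i\gamma_i-p(\gamma_i-1)$ equals $0$ precisely because of the defining relation $\gamma_i(1-\alpha_i)=1$; this is the key algebraic identity that makes the change of variables work. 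Thus
\[
\int|x_i|^{p\alpha_i}|u_{x_i}|^p\,dx=c_\gamma\gamma_i^{-p}\int y^A|v_{y_i}|^p\,dy.
\]

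\textbf{Step 3: invoke Theorem \ref{sob} and conclude.} Using the pointwise equivalence $|\nabla v|^p\simeq\sum_i|v_{y_i}|^p$, Theorem \ref{sob}(a) applied to $v$ (after approximation by $C^1_c$ functions, see below) gives
\[
\Bigl(\int_{\mathbb R^n_*}y^A|v|^{p_*}\,dy\Bigr)^{1/p_*}\le C\Bigl(\sum_{i=1}^n\int_{\mathbb R^n_*}y^A|v_{y_i}|^p\,dy\Bigr)^{1/p}.
\]
Substituting the identities from Step 2 and absorbing the constants $c_\gamma,\gamma_i$ into $C$ yields \eqref{ddf} for $u$ supported in one orthant; the general case follows by summing the $2^n$ orthant contributions.

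\textbf{Main obstacle.} The only technical point is regularity of $v$: although $u$ is $C^1$, the composition with $y_i\mapsto y_i^{\gamma_i}$ is only Lipschitz at $y_i=0$ when $\gamma_i>1$. This is harmless because Theorem \ref{sob}(a) extends by density to $W^{1,p}_0(\mathbb R^n,y^A\,dy)$, and $v$ clearly belongs to this space since it is Lipschitz with compact support. Alternatively, one may first prove \eqref{ddf} for $u$ supported away from $\{x_i=0:\alpha_i>0\}$ (so that $v\in C^1_c$) and then pass to the limit, exactly as in the approximation argument opening the proof of Theorem \ref{sob}(a).
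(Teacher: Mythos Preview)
Your proof is correct and follows exactly the approach of the paper: the change of variables $x_i=y_i^{1/(1-\alpha_i)}$ (equivalently, the paper writes $y_i=x_i^{1-\alpha_i}$) reduces \eqref{ddf} to Theorem~\ref{sob} with $A_i=\alpha_i/(1-\alpha_i)$. The paper's own proof is a single sentence stating this change of variables; your version simply supplies the omitted Jacobian computation and addresses the minor regularity issue at the coordinate hyperplanes.
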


\begin{proof} It suffices to make the change of variables $y_i=x_i^{1-\alpha_i}$ in \eqref{ddf} and then apply Theorem \ref{sob} with $A_i=\frac{\alpha_i}{1-\alpha_i}$.
\end{proof}

The optimal exponent in \eqref{ddf} is $p_*=\frac{pD}{D-p}$, as in \eqref{sobolev}.
However, in \eqref{ddf} the constant $D$ has no clear interpretation in terms of any ``dimension''.

\section{Best constant and extremal functions in the weighted Sobolev inequality}
\label{sec4}

In this section we obtain the best constant and the extremal functions in the weighted Sobolev inequality \eqref{sobolev}.

The first step is to compute the measure of the unit ball in $\R^n_*$ with the weight $x^A$.
From this, we will obtain the optimal constant in the isoperimetric inequality and, therefore, the optimal constant in Sobolev inequality for $p=1$ (see Remark \ref{optconst1}).

\begin{lem}\label{lema4.1} Let $A$ be a nonnegative vector in $\R^n$ and $B_1^*=B_1(0)\cap\R^n_*$. Then,
\[\int_{B_1^*} x^Adx=\frac{\Gamma\left(\frac{A_1+1}{2}\right)\Gamma\left(\frac{A_2+1}{2}\right)\cdots \Gamma\left(\frac{A_n+1}{2}\right)}{2^k\Gamma\left(1+\frac D2\right)},\] where $D=A_1+\cdots+A_n +n$ and $k$ is the number of strictly positive entries of $A$.
\end{lem}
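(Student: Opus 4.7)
The plan is to use the classical Gaussian evaluation trick: I will compute $I:=\int_{\R^n_*} x^A e^{-|x|^2}\,dx$ in two different ways and equate the results, and then apply the same polar decomposition to the ball to extract the desired integral.

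By symmetry reorder the coordinates so that $A_1,\ldots,A_k>0$ and $A_{k+1}=\cdots=A_n=0$. Then $\R^n_*=(\R_+)^k\times\R^{n-k}$, the weight $x^A=x_1^{A_1}\cdots x_k^{A_k}$ depends only on the first $k$ variables, and the Gaussian factors into a product of one-dimensional integrals. The substitution $t=s^2$ gives $\int_0^\infty s^{A_i}e^{-s^2}\,ds=\tfrac12\Gamma\!\left(\tfrac{A_i+1}{2}\right)$ for $i\le k$, while for $i>k$ one has $\int_{-\infty}^\infty e^{-s^2}\,ds=\sqrt\pi=\Gamma(\tfrac12)=\Gamma\!\left(\tfrac{A_i+1}{2}\right)$ since $A_i=0$. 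Multiplying,
\[
I=\frac{1}{2^k}\prod_{i=1}^n\Gamma\!\left(\tfrac{A_i+1}{2}\right).
\]

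On the other hand, passing to polar coordinates $x=r\omega$ and letting $S:=\int_{S^{n-1}\cap\,\R^n_*}\omega^A\,d\sigma(\omega)$, since $x^A=r^{A_1+\cdots+A_n}\omega^A$ and the volume element is $r^{n-1}\,dr\,d\sigma$, I obtain
\[
I=S\int_0^\infty r^{D-1}e^{-r^2}\,dr=\tfrac{S}{2}\,\Gamma\!\left(\tfrac{D}{2}\right),
\]
which equated with the previous expression determines $S$. Now the same polar-coordinate factorization applied to the ball gives
\[
\int_{B_1^*}x^A\,dx=S\int_0^1 r^{D-1}\,dr=\frac{S}{D}=\frac{2}{D\,2^k\,\Gamma(D/2)}\prod_{i=1}^n\Gamma\!\left(\tfrac{A_i+1}{2}\right),
\]
and the identity $\Gamma(1+D/2)=(D/2)\Gamma(D/2)$ absorbs the factor $2/D$ and yields the claimed formula.

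The computation is essentially routine; the only point to watch is the distinction between the $k$ coordinates with $A_i>0$ (restricted to the positive half-line in $\R^n_*$) and the $n-k$ coordinates with $A_i=0$ (unrestricted), which is precisely what produces the $2^k$ in the denominator and nothing else.
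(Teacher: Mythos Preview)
Your proof is correct and takes a genuinely different route from the paper. The paper proceeds by induction on $n$: writing $x=(x',x_n)$ with $x'\in\R^{n-1}$, it slices the ball at height $x_n$, applies the inductive hypothesis to the $(n-1)$-dimensional ball of radius $\sqrt{1-x_n^2}$, and evaluates the remaining one-dimensional integral $\int_{-1}^1|x_n|^{A_n}(1-x_n^2)^{D'/2}\,dx_n$ as a Beta function. You instead exploit the fact that $\R^n_*$ is a cone to separate radial and angular variables, and determine the angular integral $S$ by comparing two evaluations of the weighted Gaussian $\int_{\R^n_*}x^Ae^{-|x|^2}\,dx$: one as a product over coordinates (using that $\R^n_*=(\R_+)^k\times\R^{n-k}$ is itself a product), the other in polar form. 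This is the classical ``Gaussian trick'' for computing the volume of the Euclidean ball, adapted to the monomial weight. Your argument is non-inductive and arguably cleaner; the paper's slicing argument, on the other hand, stays closer to the Beta-function identities and does not require passing through an auxiliary integral over all of $\R^n_*$. Both are routine once one sees the idea.
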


\begin{proof} We will prove by induction on $n$ that
\[\int_{B_1} x^Adx=\frac{\Gamma\left(\frac{A_1+1}{2}\right)\Gamma\left(\frac{A_2+1}{2}\right)\cdots \Gamma\left(\frac{A_n+1}{2}\right)}{\Gamma\left(1+\frac D2\right)},\]
where $B_1$ is the unit ball in $\mathbb R^n$.
After this, the the lemma follows by taking into account that $m(B_1^*)=m(B_1)/2^k$.

For $n=1$ it is immediate.
Assume that this is true for $n-1$ and let us prove it for $n$. Let us denote $x=(x',x_n)$, $A=(A',A_n)$, with $x',A'\in \mathbb R^{n-1}$, and $D'=A_1+\cdots+A_{n-1}+n-1$.
Then,
\begin{eqnarray*} \int_{B_1} x^Adx&=& \int_{-1}^1|x_n|^{A_n}\left(\int_{|x'|\leq \sqrt{1-x_n^2}}x'^{A'}dx'\right)dx_n\\
&=& \int_{-1}^1|x_n|^{A_n}\left((1-x_n^2)^{\frac{D'}{2}}\int_{|y'|\leq 1}y'^{A'}dy'\right)dx_n\\
&=& \int_{|y'|\leq 1}y'^{A'}dy'\int_{-1}^1|x_n|^{A_n}(1-x_n^2)^{\frac{D'}{2}}dx_n,
\end{eqnarray*}
and hence it remains to compute $\int_{-1}^1|x_n|^{A_n}(1-x_n^2)^{\frac{D'}{2}}dx_n$.

Making the change of variables $x_n^2=t$ one obtains
\begin{eqnarray*} \int_{-1}^1|x_n|^{A_n}(1-x_n^2)^{\frac{D'}{2}}dx_n&=&2\int_0^1x_n^{A_n}(1-x_n^2)^{\frac{D'}{2}}dx_n\\
&=& \int_0^1 t^{\frac{A_n-1}{2}}(1-t)^{\frac{D'}{2}}dt\\
&=& B\left(\frac{A_n+1}{2},1+\frac{D'}{2}\right),\end{eqnarray*}
where $B$ is the Beta function.
Now, since
\begin{equation}\label{beta}
B(p,q)=\frac{\Gamma(p)\Gamma(q)}{\Gamma(p+q)},\end{equation}
then
\begin{eqnarray*}\int_{B_1} x^Adx&=&\int_{|y'|\leq 1}y'^{A'}dy'\int_{-1}^1x_n^{A_n}(1-x_n^2)^{\frac{D'}{2}}dx_n\\
&=&\frac{\Gamma\left(\frac{A_1+1}{2}\right)\cdots \Gamma\left(\frac{A_{n-1}+1}{2}\right)}{\Gamma\left(1+\frac{D'}{2}\right)}\cdot \frac{\Gamma\left(\frac{A_n+1}{2}\right)\Gamma\left(1+\frac{D'}{2}\right)}{\Gamma\left(1+\frac{D}{2}\right)}\\
&=& \frac{\Gamma\left(\frac{A_1+1}{2}\right)\Gamma\left(\frac{A_2+1}{2}\right)\cdots \Gamma\left(\frac{A_n+1}{2}\right)}{\Gamma\left(1+\frac D2\right)},\end{eqnarray*}
and the lemma follows.
\end{proof}

Now, as in the classical Sobolev inequality, we find the extremal functions in our weighted Sobolev inequality by reducing it to the radial case.
To do this, we use a weighted version of a rearrangement inequality due to Talenti \cite{T}.
His result states that, whenever balls minimize the isoperimetric quotient with a weight $w$, there exists a radial rearrangement (of $u$) which preserves $\int f(u)w\,dx$ and decreases $\int \Phi(|\nabla u|)w\,dx$ (under some conditions on $\Phi$).
When $w=x^A$, this is stated in the following.

\begin{prop}\label{rear} Let $u$ be a Lipschitz continuous function in $\mathbb R^n_*$ with compact support in $\overline{\mathbb R^n_*}$. Then, denoting $m(E)=\int_E x^Adx$, there exists a radial rearrangement $u_*$ of $u$ such that
\begin{itemize}
\item[(i)] $m(\{|u|>t\})=m(\{u_*>t\})$ for all $t$,
\item[(ii)] $u_*$ is radially decreasing,
\item[(iii)] for every Young function $\Phi$ (i.e., convex and increasing function that vanishes at $0$),
\[\int_{\mathbb R^n_*}\Phi(|\nabla u_*|)x^Adx\leq \int_{\mathbb R^n_*} \Phi(|\nabla u|)x^Adx.\]
\end{itemize}
\end{prop}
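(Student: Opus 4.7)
The plan is to prove the proposition as a weighted analogue of Talenti's classical symmetrization argument, in which the sharp isoperimetric inequality of Theorem \ref{isoperimetric} (with the balls $B_r^*$ as extremals) plays the crucial role.

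First I would construct the rearrangement explicitly from the weighted distribution function $\mu(t) := m(\{|u| > t\})$. Since $r \mapsto m(B_r^*) = m(B_1^*)\, r^D$ is a bijection of $[0,\infty)$ onto itself, for each $t \geq 0$ there is a unique $r(t) \geq 0$ with $m(B_{r(t)}^*) = \mu(t)$. I set
\[ u_*(x) := \sup\{t \geq 0 : m(B_{|x|}^*) \leq \mu(t)\}. \]
By construction $u_*$ is radial and radially nonincreasing, $\{u_* > t\}=B_{r(t)}^*$, and $m(\{u_*>t\}) = \mu(t)$, yielding (i) and (ii). One also checks that $u_*$ is Lipschitz with compact support in $\overline{\R^n_*}$.

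For (iii) I would first reduce to the case $u \in C^\infty_c(\R^n_*)$ by mollification, and then combine the weighted coarea formula with Jensen's inequality on each level set. Specifically,
\[ \int_{\R^n_*} \Phi(|\nabla u|)\, x^A\, dx = \int_0^\infty \int_{\{|u|=t\}} \frac{\Phi(|\nabla u|)}{|\nabla u|}\, x^A\, d\sigma\, dt, \]
and $-\mu'(t) = \int_{\{|u|=t\}} x^A/|\nabla u|\, d\sigma$ for a.e.\ $t$. Viewing $x^A d\sigma/|\nabla u|$ as a positive measure of total mass $-\mu'(t)$ on $\{|u|=t\}$, the convexity of $\Phi$ together with Jensen's inequality (applied to the function $|\nabla u|$) yields
\[ \int_{\{|u|=t\}} \frac{\Phi(|\nabla u|)}{|\nabla u|}\, x^A\, d\sigma \;\geq\; (-\mu'(t))\, \Phi\!\left(\frac{P(\{|u|>t\})}{-\mu'(t)}\right). \]
Since $\Phi$ is nondecreasing and $m(\{|u|>t\}) = m(B_{r(t)}^*)$, Theorem \ref{isoperimetric} forces $P(\{|u|>t\}) \geq P(B_{r(t)}^*) = P(\{u_*>t\})$, so the right-hand side is bounded below by the corresponding quantity for $u_*$. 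For $u_*$ Jensen's inequality is an equality because its level sets $\partial B_{r(t)}^*$ carry a constant $|\nabla u_*|$, so running the coarea formula backwards gives exactly $\int_{\R^n_*} \Phi(|\nabla u_*|)\, x^A\, dx$. Integrating in $t$ completes (iii).

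The main obstacle is the technical regularity required to legitimate the coarea formula and the Jensen step: one needs $\mu$ absolutely continuous and the set $\{|\nabla u|=0\}$ to meet a.e.\ level set in a $d\sigma$-null subset. Both properties hold for $u \in C^\infty_c(\R^n_*)$ via Sard's theorem and Federer's coarea formula (the weight $x^A$ being smooth and bounded on compact subsets of $\R^n_*$ causes no extra difficulty there). One then passes to Lipschitz $u$ with support in $\overline{\R^n_*}$ by mollification, using that $u_\varepsilon \to u$ uniformly with equibounded Lipschitz constants, that the corresponding distribution functions converge, and that $v \mapsto \int \Phi(|\nabla v|)\, x^A\, dx$ is lower semicontinuous under such convergence. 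This mirrors Talenti's unweighted scheme, and works here because Theorem \ref{isoperimetric} supplies precisely the sharp weighted isoperimetric estimate.
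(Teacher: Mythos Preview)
Your proposal is correct and follows exactly the same route as the paper: the paper's proof consists of the single sentence ``It is a direct consequence of the main theorem in \cite{T} and our isoperimetric inequality \eqref{isop}'', while you have unpacked the content of Talenti's weighted P\'olya--Szeg\H{o} scheme (coarea formula, Jensen on level sets, and the sharp isoperimetric inequality~\eqref{isop}) that \cite{T} provides as a black box. The arguments are identical in substance; you simply supply the details the paper outsources.
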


\begin{proof} It is a direct consequence of the main theorem in \cite{T} and our isoperimetric inequality \eqref{isop}.
\end{proof}

We can now find the best constant in the weighted Sobolev inequality \eqref{sobolev}. The proof is based on Proposition \ref{rear}, which allows us to reduce the problem to radial functions in $\mathbb R^n_*$.
Then, the functional that we must minimize is exactly the same as in the classical Sobolev inequality but with a noninteger exponent $D$ in the 1D weight, and the proof finishes by applying another result of Talenti in \cite{T2}.

\begin{prop}\label{plp} The best constant in the Sobolev inequality \eqref{sobolev} is given by
\begin{equation} \label{bestcnt} C_1= D\left(\frac{\Gamma\left(\frac{A_1+1}{2}\right)\Gamma\left(\frac{A_2+1}{2}\right)\cdots \Gamma\left(\frac{A_n+1}{2}\right)}{2^k\Gamma\left(1+\frac D2\right)}\right)^{\frac1D}\qquad \textrm{for }\ p=1\end{equation}
and by
\begin{equation}\label{bestcnt2} C_p=C_1D^{\frac{1}{D}-1-\frac1p}\left(\frac{p-1}{D-p}\right)^{\frac{1}{p'}}\left(\frac{p'\Gamma(D)}{\Gamma\left(\frac{D}{p}\right)\Gamma\left(\frac{D}{p'}\right)}\right)^{\frac1D}\qquad \textrm{for }\ 1<p<D.\end{equation}
Here, $p'=\frac{p}{p-1}$ and $k$ is the number of positive entries in the vector $A$.

Moreover, this constant is not attained by any function in $W^{1,1}_0(\R^n,x^Adx)$ when $p=1$.
Instead, when $1<p<D$ this constant is attained in $W^{1,p}_0(\R^n,x^Adx)$ by
\[u_{a,b}(x)=\left(a+b|x|^{\frac{p}{p-1}}\right)^{1-\frac{D}{p}},\] where $a$ and $b$ are arbitrary positive constants.
\end{prop}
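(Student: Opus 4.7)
The plan is to treat the two regimes $p=1$ and $1<p<D$ separately, reducing the second one to a one-dimensional minimization problem via the weighted rearrangement of Proposition \ref{rear}.

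\textbf{Case $p=1$.} The strategy is to identify the best Sobolev constant with the best isoperimetric constant. As noted in Remark \ref{optconst1}, the argument given in the proof of Theorem \ref{sob}(a) for $p=1$ (which relies only on Minkowski's integral inequality, the coarea formula, and the isoperimetric inequality \eqref{isop}) produces the sharp constant, because Lipschitz characteristic functions can be smoothly approximated in such a way that $\|\nabla u_\varepsilon\|_{L^1(x^A)}\to P(E)$. Thus $C_1 = P(B_1^*)/m(B_1^*)^{(D-1)/D} = D\, m(B_1^*)^{1/D}$ by \eqref{eqmp}. Inserting the explicit formula for $m(B_1^*)$ from Lemma \ref{lema4.1} yields exactly \eqref{bestcnt}. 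Non-attainment follows from the equality case of Minkowski's inequality in the above proof: equality forces all super-level sets $\{u>t\}$ to be homothetic to a single set, so the extremizer must be a characteristic function, which does not belong to $W^{1,1}_0(\R^n, x^A dx)$.

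\textbf{Case $1<p<D$, reduction to a radial 1D problem.} Given $u\in C^1_c(\R^n)$, apply Proposition \ref{rear} to obtain a radially decreasing rearrangement $u_*(x)=v(|x|)$ with $\|u_*\|_{L^{p_*}(x^A)} = \|u\|_{L^{p_*}(x^A)}$ and $\|\nabla u_*\|_{L^p(x^A)} \leq \|\nabla u\|_{L^p(x^A)}$ (the choice $\Phi(t)=t^p$ is admissible). Hence the best constant $C_p$ is attained when we restrict to radial functions. For such functions, passing to polar coordinates and using that
\[
\int_{\R^n_*} x^A f(|x|)\,dx = c_A \int_0^\infty f(r)\, r^{D-1}\,dr, \qquad c_A = D\, m(B_1^*),
\]
the weighted Sobolev quotient becomes $c_A^{1/D}\, J(v)$, where
\[
J(v)= \frac{\bigl(\int_0^\infty r^{D-1}|v'|^p\,dr\bigr)^{1/p}}{\bigl(\int_0^\infty r^{D-1}|v|^{p_*}\,dr\bigr)^{1/p_*}}.
\]

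\textbf{One-dimensional minimization.} The functional $J$ is exactly the one studied by Talenti in \cite{T2}; although $D$ is now an arbitrary real number larger than $p$ rather than an integer, the variational computation (Euler--Lagrange ODE followed by integration in terms of the Beta function) goes through unchanged. Its minimizers are precisely $v(r) = (a+b\, r^{p/(p-1)})^{1-D/p}$ for arbitrary $a,b>0$, which gives the extremals \eqref{extr} after recomposing with $r=|x|$. Moreover these extremals belong to $W^{1,p}_0(\R^n, x^A dx)$ because $1-D/p<0$ makes them decay at infinity at a rate sufficient for both $\int x^A|u_{a,b}|^{p_*}dx$ and $\int x^A|\nabla u_{a,b}|^{p}dx$ to be finite (a direct check using Beta-function integrals). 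The explicit minimum value of $J$ from Talenti \cite{T2} is a product of powers of $D$, $p$, $p-1$, $D-p$ and a ratio $\Gamma(D)/(\Gamma(D/p)\Gamma(D/p'))$. Multiplying by $c_A^{1/D} = (D\,m(B_1^*))^{1/D}$, which equals $C_1 D^{1/D-1}$ by the $p=1$ computation above, and collecting powers yields \eqref{bestcnt2}.

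\textbf{Main obstacle.} The conceptually delicate point is the passage from the generic $u$ to the radial case, which is exactly what the symmetrization result Proposition \ref{rear} supplies; this in turn depends crucially on having balls $B_r^*$ as extremal sets in the \emph{sharp} isoperimetric inequality \eqref{isop}. Once the radial reduction is in place, everything else is bookkeeping: the 1D minimization is classical \cite{T2} and extends verbatim to non-integer $D$, and the constant is assembled by combining Lemma \ref{lema4.1} with Talenti's explicit formula. The most error-prone step will be the final simplification of the Gamma-function product into the form \eqref{bestcnt2}; I would organize this by first writing $C_p = C_1 \cdot (\inf J)/D^{(D-1)/D}$ and then substituting Talenti's expression for $\inf J$.
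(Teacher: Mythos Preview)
Your proposal is correct and follows essentially the same route as the paper's proof: for $p=1$ you invoke the coarea/isoperimetric argument and Lemma \ref{lema4.1}, and for $1<p<D$ you reduce to radial functions via Proposition \ref{rear}, pass to polar coordinates (your $c_A=D\,m(B_1^*)$ is exactly the paper's $P(B_1^*)$ by \eqref{eqmp}), and finish with Talenti's one-dimensional Lemma \ref{t2}. The only cosmetic difference is that the paper quotes Lemma \ref{t2} directly rather than re-deriving its validity for non-integer $D$, and it organizes the final constant as $P(B_1^*)^{1/D}=D^{1/D-1}C_1$ before inserting $J(\varphi)$.
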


Before giving the proof of Proposition \ref{plp}, we recall Lemma 2 from \cite{T2}, where the best constant for the classical Sobolev inequality is obtained.

\begin{lem}[\cite{T2}]\label{t2}
Let $m$, $p$, and $q$ be real numbers such that \[1<p<m\qquad\textrm{and}\qquad q=\frac{mp}{m-p}.\]
Let $u$ be any real-valued function of a real variable $r$, which is Lipschitz continuous and such that
\[\int_0^{+\infty}r^{m-1}|u'(r)|^pdr<+\infty\qquad\textrm{and}\qquad u(r)\rightarrow0\ \ \mbox{as}\ \ r\rightarrow+\infty.\]
Then,
\[\frac{\left(\int_0^{+\infty} r^{m-1}|u(r)|^{q}dr\right)^{\frac1q}}{\left(\int_0^{+\infty} r^{m-1}|u'(r)|^{p}dr\right)^{\frac1p}}\leq
\frac{\left(\int_0^{+\infty} r^{m-1}|\varphi(r)|^{q}dr\right)^{\frac1q}}{\left(\int_0^{+\infty} r^{m-1}|\varphi'(r)|^{p}dr\right)^{\frac1p}}=:J(\varphi),\]
where $\varphi$ is any function of the form
\[\varphi(r)=(a+br^{p'})^{1-\frac mp}\] with $a$ and $b$ positive constants. Here $p'=p/(p-1)$.

Moreover,
\[J(\varphi)=m^{-\frac1p}\left(\frac{p-1}{m-p}\right)^{\frac{1}{p'}}\left[\frac{1}{p'}B\left(\frac mp,\frac{m}{p'}\right)\right]^{-\frac1m},\]
where $B$ is the Beta function.
\end{lem}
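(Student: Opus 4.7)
The plan is to treat this as a one-dimensional constrained variational problem on the half-line with weight $r^{m-1}$, and to identify its extremals through the Euler--Lagrange equation. First, I observe that $J(u)$ is invariant under the two-parameter group of rescalings $u(r) \mapsto c\,u(\lambda r)$, with $c,\lambda>0$; this uses precisely the criticality relation $q = mp/(m-p)$. Hence, if the supremum of $J$ is attained, it is attained along a two-parameter orbit. Next, I reduce to nonnegative, monotonically decreasing test functions. Replacing $u$ by $|u|$ preserves both integrals, and the decreasing rearrangement $u^*$ with respect to the measure $r^{m-1}\,dr$ on $(0,\infty)$ satisfies $\int_0^{\infty} r^{m-1}|(u^*)'|^p\, dr \leq \int_0^{\infty} r^{m-1}|u'|^p\, dr$ while preserving the weighted $L^q$ norm. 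This is the elementary weighted P\'olya--Szeg\H{o} inequality in 1D; it is immediate once one notes that the isoperimetric sets for $r^{m-1}\,dr$ on $(0,\infty)$ are exactly the intervals $(0,R)$.

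\textbf{Paragraph 2.} For $u$ nonnegative and decreasing with $u(\infty)=0$, I would formulate the constrained minimization
\[
\min\Bigl\{\int_0^{+\infty} r^{m-1}(-u')^p\, dr \ :\ \int_0^{+\infty} r^{m-1} u^q\, dr = 1\Bigr\},
\]
whose Euler--Lagrange equation is the singular ODE
\[
\bigl(r^{m-1}(-u')^{p-1}\bigr)' \,=\, \mu\, r^{m-1}\, u^{q-1} \quad\text{on } (0,+\infty),
\]
with Lagrange multiplier $\mu>0$. A direct power-rule computation verifies that every $\varphi(r) = (a + b\,r^{p'})^{1-m/p}$ solves this ODE, with $\mu = \mu(a,b,m,p)$ explicit; hence the two-parameter family of $\varphi$'s furnishes critical points. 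Existence of a minimizer follows from the standard direct method applied to a rearranged minimizing sequence, which is precompact in locally uniform convergence on $(0,\infty)$ thanks to uniform monotonicity together with the bound on the weighted $L^p$ norm of $u'$.

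\textbf{Paragraph 3.} The main obstacle is then the uniqueness step: showing that every positive, decreasing minimizer is some $\varphi$. Using the two-parameter scaling symmetry to normalize $u(0)$ and $\mu$, the task reduces to proving uniqueness of a positive decaying solution of the ODE with finite weighted $L^q$ norm. I would carry this out through a phase-plane analysis of the first-order planar system equivalent to the quasilinear ODE, or equivalently through a Pohozaev-type integral identity: the homogeneities of both sides of the ODE under $u(r) \mapsto c\,u(\lambda r)$ constrain the set of admissible decaying trajectories to be exactly the family of $\varphi$'s. Once this has been established, the explicit value of $J(\varphi)$ follows by dilation invariance (so that one may set $a=b=1$) and the substitution $s = r^{p'}$, which transforms both $\int_0^\infty r^{m-1}\varphi^q\, dr$ and $\int_0^\infty r^{m-1}|\varphi'|^p\, dr$ into standard Beta integrals. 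Applying the identity \eqref{beta} together with the functional equation $\Gamma(z+1) = z\,\Gamma(z)$ then assembles to the claimed closed form for $J(\varphi)$.
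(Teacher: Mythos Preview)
The paper does not prove this lemma at all: it is quoted verbatim from Talenti~\cite{T2} (note the attribution \texttt{[\textbackslash cite\{T2\}]} in the lemma header) and used as a black box in the proof of Proposition~\ref{plp}. So there is nothing in the present paper to compare your argument against.

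That said, your outline is a recognizable variational approach, and the reduction in Paragraph~1 (replace $u$ by $|u|$, then by its decreasing rearrangement with respect to $r^{m-1}\,dr$) is sound, as is the Euler--Lagrange equation in Paragraph~2 and the Beta-integral computation at the end of Paragraph~3. The genuine gap is the uniqueness step in Paragraph~3: invoking ``phase-plane analysis'' or ``a Pohozaev-type identity'' names a strategy but does not carry it out, and for this particular quasilinear ODE with critical exponent the uniqueness of decaying positive solutions is not a one-line consequence of either. If you want to make this self-contained, the cleanest route is not the variational one but the original argument of Bliss (1930), which Talenti reproduces in~\cite{T2}: a Hardy-type inequality combined with an explicit substitution identifies the extremals directly, bypassing the existence/uniqueness machinery for the Euler--Lagrange ODE altogether.
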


We can now give the

\begin{proof}[Proof of Proposition \ref{plp}.]
For $p=1$, the best constant in Sobolev inequality is the same than in the isoperimetric inequality (see Remark \ref{optconst1}).
Recalling \eqref{cttC1}, it is given by $C_1=Dm(B_1^*)^{1/D}$.
Thus, the value of $C_1$ follows from Lemma \ref{lema4.1}.
That $C_1$ is not attained by any $W^{1,1}_0(\R^n,x^Adx)$ function was explained in Remark \ref{optconst1}.

Let now $1<p<D$, $u$ be a $C^1(\overline{\R^n_*})$ function with compact support in $\overline{\R^n_*}$, and $u_*$ be its radial rearrangement given by Proposition \ref{rear}.
Then, by the proposition,
\[\frac{\|\nabla u_*\|_{L^{p}(\mathbb R^n_*,x^Adx)}}{\|u_*\|_{L^{p_*}(\mathbb R^n_*,x^Adx)}}\leq \frac{\|\nabla u\|_{L^{p}(\mathbb R^n_*,x^Adx)}}{\|u\|_{L^{p_*}(\mathbb R^n_*,x^Adx)}}.\]
Moreover,
\begin{eqnarray*} \int_{\mathbb R^n_*}x^A|u_*|^{p_*}dx&=& \int_0^{\infty}\left(\int_{\partial B_r^*}x^A|u_*|^{p_*}d\sigma\right)dr\\
&=& \int_0^{\infty} r^{D-1}|u_*|^{p_*}\left(\int_{\partial B_1^*}x^Ad\sigma\right)dr\\
&=&P(B_1^*)\int_0^{\infty} r^{D-1}|u_*|^{p_*}dr\end{eqnarray*}
and, analogously,
\[\int_{\mathbb R^n_*}x^A|\nabla u_*|^{p}dx=P(B_1^*)\int_0^{\infty} r^{D-1}|u_*'|^{p}dr.\]
Therefore, the best constant in the Sobolev inequality can be computed as
\begin{equation*}\label{cp}
\inf_{u\in C^1_c(\mathbb R^n)}\frac{\|\nabla u\|_{L^{p}(\mathbb R^n_*,x^Adx)}}{\|u\|_{L^{p_*}(\mathbb R^n_*,x^Adx)}}
=P(B_1^*)^{\frac{1}{D}}\inf_{u\in C^1_c(\mathbb R)}\frac{\left(\int_0^{\infty} r^{D-1}|u'|^{p}dr\right)^{1/p}}{\left(\int_0^{\infty} r^{D-1}|u|^{p_*}dr\right)^{1/p_*}},\end{equation*}
where we have used that $\frac1p-\frac{1}{p_*}=\frac1D$.
Recalling \eqref{eqmp} and \eqref{cttC1}, we have
\[P(B_1^*)^{\frac{1}{D}}=D^{\frac1D}m(B_1^*)^{\frac1D}=D^{\frac{1}{D}-1}C_1.\]
The value of $C_p$ follows from Lemma \ref{t2}, using \eqref{bestcnt} and \eqref{beta}.
From Lemma \ref{t2} it also follows that the functions $u_{a,b}$ in \eqref{extr} attain the best constant $C_p$.
\end{proof}

To end this section, we prove part (b) of Theorem \ref{sob}.

\begin{proof}[Proof of Theorem \ref{sob} (b).] For $p=1$ this was proved in Section \ref{sec3}; see Remark \ref{optconst1}.
For $p>1$ the result is proved in Proposition \ref{plp}.
\end{proof}

\section{Weighted Morrey inequality}\label{sec5}

In this section we prove Theorem \ref{morrey} \footnote{The proof of Theorem 1.6 in the previous version of this paper was not correct.
Indeed, Lemma~5.1 in that version was not true as stated therein. We thank Georgios Psaradakis for pointing this to us.},
that is, we show
\[\frac{|u(x)-u(y)|}{|x-y|^\alpha}\leq C\left(\int_{\R^n_*}|\nabla u|^p z^Adz\right)^{1/p},\qquad \alpha=1-\frac{D}{p},\]
for $p>D$.
The next lemma establishes this inequality for $y=0$.
Once this is done, Theorem \ref{morrey} follows quite easily from this case.

\begin{lem}\label{lemamorrey}
Let $A$ be a nonnegative vector in $\R^n$, $D=A_1+\cdots+A_n+n$, and $p>D$.
Let $u\in C^1_c(\mathbb R^n)$ and $x\in \mathbb R^n_*$.
Then,
\begin{equation}\label{ineq-morrey-0}
|u(x)-u(0)|\leq C\left(\int_{\R^n_*}|\nabla u|^p z^Adz\right)^{1/p} |x|^{1-\frac{D}{p}},
\end{equation}
where $C$ is a constant depending only on $p$ and $D$.
\end{lem}

Before proving this result, let us show briefly the ideas of the proof by establishing first the (weaker) inequality
\begin{equation}\label{easycase}
|u(x)-u(0)|\leq C\left(\int_{\R^n}|\nabla u|^p |z|^\gamma dz\right)^{1/p} |x|^{1-\frac{n+\gamma}{p}},\qquad p>D=n+|A|=:n+\gamma.
\end{equation}
Note that since $z^A\leq |z|^{|A|}$, then the inequality in Lemma \ref{lemamorrey} is stronger than \eqref{easycase}.

To show \eqref{easycase}, we use the classical Morrey inequality in $\R^n$.
Indeed, for any $x\in \R^n$ with $|x|=r$ we have (see Theorem 7.17 in \cite{GT})
\begin{equation}\label{andp}
\begin{split}|u(x)-u(x/2)|&\leq C\left(\int_{B_{r/2}(x)}|\nabla u|^p dz\right)^{1/p} r^{1-\frac{n}{p}}\\
&\leq C\left(\int_{\R^n}|\nabla u|^p |z|^\gamma dz\right)^{1/p} r^{1-\frac{n+\gamma}{p}},
\end{split}\end{equation}
where we have used that $|z|^\gamma\geq cr^\gamma$ in $B_{r/2}(x)$.

Thus, for each $x\in \R^n$ we have
\[|u(x)-u(x/2)|\leq C\left(\int_{\R^n}|\nabla u|^p |z|^\gamma dz\right)^{1/p} |x|^{1-\frac{n+\gamma}{p}}.\]
Writing the same inequality for $x/2$, $x/4$, $x/8$, etc., and adding up a geometric series, we then find
\[\begin{split}
|u(x)-u(0)|&\leq C\left(\int_{\R^n}|\nabla u|^p |z|^\gamma dz\right)^{1/p} \sum_{k\geq0}\left|\frac{x}{2^k}\right|^{1-\frac{n+\gamma}{p}}\\
&\leq C\left(\int_{\R^n}|\nabla u|^p |z|^\gamma dz\right)^{1/p}|x|^{1-\frac{n+\gamma}{p}}.\end{split}\]
This establishes \eqref{easycase}.

We now establish Lemma \ref{lemamorrey} by using the same idea.
The proof will be a little bit more involved because one has to deal with the weight $z^A=z_1^{A_1}\cdots z_n^{A_n}$, and thus we will need to add up $n$ geometric series.

\begin{proof}[Proof of Lemma \ref{lemamorrey}]
Recall that we want to show \eqref{ineq-morrey-0}.
For it, we assume without loss of generality that $x=(x_1,...,x_n)$, with $x_1\geq x_2\geq \cdots \geq x_n\geq0$.
Also, we denote $x'=(x_1,...,x_{n-1})$.

Then, for any number $y$ such that $0\leq y\leq x_{n-1}$, we have
\[\begin{split}
|u(x',y)-u(x',y/2)|&\leq C\left(\int_{B_{y/2}(x',y)}|\nabla u|^p dz\right)^{1/p} |y|^{1-\frac{n}{p}}\\
&\leq C\left(\int_{\R^n_*}|\nabla u|^p z^A dz\right)^{1/p} |y|^{1-\frac{D}{p}}.\end{split}\]
We have used the classical Morrey inequality  (see Theorem 7.17 in \cite{GT}) and the fact that
$z^A\geq c|y|^{|A|}$ in $B_{y/2}(x',y)$.

Adding up a geometric series, this yields
\[|u(x',y)-u(x',0)|\leq C\left(\int_{\R^n_*}|\nabla u|^p z^A dz\right)^{1/p} |y|^{1-\frac{D}{p}}\qquad \textrm{for all}\ 0\leq y\leq x_{n-1}.\]
In particular,
\[|u(x',x_n)-u(x',0)|\leq C\left(\int_{\R^n_*}|\nabla u|^p z^A dz\right)^{1/p} |x_n|^{1-\frac{D}{p}}.\]
From this (applied to $x=(x',x_n)$ and to $(x',x_{n-1})$), we deduce
\[|u(x_1,...,x_{n-1},x_n)-u(x_1,...,x_{n-1},x_{n-1})|\leq C\left(\int_{\R^n_*}|\nabla u|^p z^A dz\right)^{1/p} |x|^{1-\frac{D}{p}}.\]

Now, one can repeat the same argument to get
\[|u(x_1,...,x_{n-2},x_{n-1},x_{n-1})-u(x_1,...,x_{n-2},x_{n-2},x_{n-2})|\leq C\left(\int_{\R^n_*}|\nabla u|^p
z^A \right)^{1/p} |x|^{1-\frac{D}{p}}\]
---by comparing $u(x_1,...,x_{n-2},y,y)$ with $u((x_1,...,x_{n-2},y/2,y/2)$ for all $y\leq x_{n-2}$,
and then with $u(x_1,...,x_{n-2},0,0)$ adding up a geometric series.

Proceeding analogously for the rest of the coordinates and then adding all these inequalities, we find that
\[|u(x_1,...,x_1)-u(x)|\leq C\left(\int_{\R^n_*}|\nabla u|^p z^A dz\right)^{1/p} |x|^{1-\frac{D}{p}},\]
and therefore it only remains to control $|u(x_1,...,x_1)-u(0,...,0)|$.

For this, we use a similar argument as in \eqref{easycase}-\eqref{andp}. We first show that there exists $\lambda >1$
depending only on $n$ such that, for any $y>0$,
\begin{equation*}\label{first}
\left|u(y,...,y)-u\left(y/\lambda,...,y/\lambda\right)\right|\leq
C\left(\int_{\R^n_*}|\nabla u|^p z^A dz\right)^{1/p} |y|^{1-\frac{D}{p}}.
\end{equation*}
This is proved applying Theorem 7.17 of \cite{GT} in the ball $B_{\sqrt{n}(1-1/\lambda)y}(y,\ldots,y)$,
whose closure contains the point $(y/\lambda,...,y/\lambda)$. We also
use that $z_i\geq y/2$ for all index $i$ and all $z\in B_{\sqrt{n}(1-1/\lambda)y}(y,\ldots,y)$,
if we take $\lambda>1$ close enough to 1.
Taking $y=x_1$ and adding all these inequalities, we deduce
\[|u(x_1,...,x_1)-u(0)|\leq C\left(\int_{\R^n_*}|\nabla u|^p z^A dz\right)^{1/p} |x|^{1-\frac{D}{p}}.\]

As mentioned above, from this it follows that
\[|u(x)-u(0)|\leq C\left(\int_{\R^n_*}|\nabla u|^p z^A dz\right)^{1/p} |x|^{1-\frac{D}{p}},\]
and hence the lemma is proved.
\end{proof}

We can now give the:

\begin{proof}[Proof of Theorem \ref{morrey}.]
Let us show that
\begin{equation}\label{morr2}
\frac{|u(y)-u(z)|}{|y-z|^{1-\frac Dp}}\leq C\left(\int_{\mathbb R^n_*}x^A|\nabla u|^pdx\right)^{\frac1p}
\end{equation}
for all $y$ and $z$ in $\R^n_*$.
We split the proof of \eqref{morr2} in three steps.

\emph{Step 1.} First, by Lemma \ref{lemamorrey}, we have that \eqref{morr2} holds for $z=0$.

\emph{Step 2.}
We now prove \eqref{morr2} for $y$ and $z$ in $\mathbb R^n_*$ such that $y-z\in\mathbb R^n_*$.
Applying the inequality in Step 1 to the function $v(\tilde y)=u(\tilde y+z)$, $\tilde y\in\R^n$, at the point
$\tilde y=y-z\in \mathbb R^n_*$, we deduce
\[|u(y)-u(z)|\leq C \left(\int_{z+\mathbb R^n_*}(x-z)^A|\nabla u(x)|^pdx\right)^{\frac1p}|y-z|^{1-\frac{D}{p}},\]
where $z+\R^n_*=\{x\in \R^n\,:\, x-z\in\R^n_*\}$.
Therefore, since $(x-z)^A\leq x^A$ if $x$ and $x-z$ belong to $\mathbb R^n_*$, this case of \eqref{morr2} follows.

\emph{Step 3.}
We finally prove \eqref{morr2} for all $y$ and $z$ in $\mathbb R^n_*$.
Define $w\in\R^n_*$ as $w_i=\min\{y_i,z_i\}$ for all $i$.
Then, it is clear that $y-w\in\mathbb R^n_*$ and $z-w\in \mathbb R^n_*$.
Hence, we can apply the inequality proved in Step 2 to obtain
\[|u(y)-u(w)|\leq C \left(\int_{\mathbb R^n_*}x^A|\nabla u|^pdx\right)^{\frac1p}|y-w|^{1-\frac{D}{p}}\]
and
\[|u(z)-u(w)|\leq C \left(\int_{\mathbb R^n_*}x^A|\nabla u|^pdx\right)^{\frac1p}|z-w|^{1-\frac{D}{p}}.\]
Since $|y-w|^2+|z-w|^2=|y-z|^2$, from these two inequalities we deduce that
\[|u(y)-u(z)|\leq 2C \left(\int_{\mathbb R^n_*}x^A|\nabla u|^pdx\right)^{\frac1p}|y-z|^{1-\frac{D}{p}}\]
for all $y,z\in\mathbb R^n_*$.
This finishes the proof of \eqref{morr2}.

Let us prove now \eqref{cormorrey}.
Let $x_0\in\Omega\subset\R^n$ be such that $\sup_\Omega |u|=|u(x_0)|$.
After a finite number of reflections with respect to the coordinate hyperplanes, we may assume that $x_0\in\overline{\R^n_*}$.
Call $\tilde u$ the function $u$ after doing such reflections, defined in the reflected domain $\widetilde \Omega$.
Since $\tilde u\equiv0$ on $\partial\widetilde\Omega$, we have
\[\sup_{\Omega}|u|\cdot{\rm diam}(\Omega)^{-1+\frac{D}{p}}=|\tilde u(x_0)|\cdot{\rm diam}(\widetilde\Omega)^{-1+\frac{D}{p}}\leq \sup_{x,\,y\in\R^n_*}\frac{|\tilde u(x)-\tilde u(y)|}{|x-y|^{1-\frac{D}{p}}}.\]
The right hand side of this inequality is now bounded using \eqref{holder}.
The proof is finished controlling the integral over $\R^n_*$ in \eqref{holder} by an integral over $\Omega\subset\R^n$.
This is needed because of the reflections done initially.
\end{proof}

\section{Weighted Trudinger inequality and proof of Corollary \ref{embeddings}}
\label{sec6}

In this section we prove Theorem \ref{trudinger} and Corollary \ref{embeddings}.
The proof of the weighted Trudinger inequality is based on a bound for the best constant of the weighted Sobolev inequality as $p$ goes to $D$.
Then, the result follows by expanding $\exp(\cdot)$ as a power series and using the weighted Sobolev inequality in each term.
To prove the convergence of this series we need the mentioned bound, which is stated in the following result.

\begin{lem}\label{estimacioCp}
Let $A$ be a nonnegative vector in $\R^n$, $D=A_1+\cdots+A_n+n$, and $p$ be such that $1<p<D$.
Let $C_p$ be the optimal constant of the Sobolev inequality \eqref{sobolev}, given by \eqref{bestcnt}-\eqref{bestcnt2}.
Then,
\[C_p\leq C_0p_*^{1-\frac1D},\]
where $p_*=\frac{pD}{D-p}$ and $C_0$ is a constant which depends only on $D$.
\end{lem}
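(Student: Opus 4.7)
The plan is to read the asymptotic behavior of $C_p$ directly off the explicit formula \eqref{bestcnt2}, isolating the single factor responsible for the blow-up as $p\to D$. Using the identity $D-p=pD/p_*$, one can rewrite
\[
\left(\frac{p-1}{D-p}\right)^{1/p'}
=\left(\frac{(p-1)\,p_*}{pD}\right)^{(p-1)/p}.
\]
I would then divide this factor by $p_*^{(D-1)/D}$ and simplify. After matching the exponents of $p_*$ via the identity $\frac{p-1}{p}-\frac{D-1}{D}=-\frac{D-p}{pD}$ and using $p_*=pD/(D-p)$ once more, the quotient rearranges into
\[
(p-1)^{(p-1)/p}\,(D-p)^{(D-p)/(Dp)}\,(pD)^{-(D-1)/D},
\]
which extends to a continuous function on the closed interval $[1,D]$ once one observes that $x^x\to 1$ as $x\to 0^+$ handles the endpoint indeterminacies. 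This ratio is thus bounded above by a constant $K(D)$.

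Next, I would show that every remaining factor in \eqref{bestcnt2}, namely $C_1$, $D^{1/D-1-1/p}$, and $\bigl(p'\,\Gamma(D)/(\Gamma(D/p)\,\Gamma(D/p'))\bigr)^{1/D}$, is bounded on $(1,D)$ by a constant depending only on $D$. The gamma factor is the only delicate one: as $p\to 1^+$ both $p'\to\infty$ and $\Gamma(D/p')\to\infty$, but the asymptotic $\Gamma(x)\sim 1/x$ as $x\to 0^+$ yields $p'/\Gamma(D/p')\to D$, so the whole expression stays bounded. At $p\to D^-$ every piece has a finite positive limit. Continuity on the compact interval $[1,D]$ then produces the desired $D$-dependent upper bound.

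Combining the two ingredients gives $C_p\le C_0\,p_*^{1-1/D}$ for all $p\in(1,D)$. The main obstacle I anticipate is purely combinatorial bookkeeping in the first step: correctly lining up the three competing exponents $(p-1)/p$, $(D-1)/D$, and $(D-p)/(pD)$ that arise from $1/p'$, the target power $p_*^{1-1/D}$, and the relation $D-p=pD/p_*$. Once these are organized, the rest is verification of boundedness of continuous functions on a compact interval via their endpoint limits.
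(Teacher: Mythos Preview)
Your proposal is correct and follows essentially the same approach as the paper: isolate the factor $\left(\frac{p-1}{D-p}\right)^{1/p'}$ as the sole source of blow-up, convert it to a power of $p_*$ via $D-p=pD/p_*$, and verify that every remaining factor in \eqref{bestcnt2} (including $C_1$) stays bounded by a constant depending only on $D$. Your treatment is in fact more explicit than the paper's---you carry out the exponent bookkeeping and the $p\to 1^+$ gamma asymptotic that the paper dismisses with ``it is easy to see''---but the underlying argument is identical.
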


\begin{proof} The optimal constant is given by
\[C_p=C_1D^{1-\frac{1}{D}-\frac1p}\left(\frac{p-1}{D-p}\right)^{\frac{1}{p'}}\left(\frac{p'\Gamma(D)}{\Gamma\left(\frac{D}{p}\right)\Gamma\left(\frac{D}{p'}\right)}\right)^{\frac1D},\]
where $p'=p/(p-1)$ and $C_1$ is a constant which only depends on $A$ and $n$.
It is easy to see that the constant $C_p$ is bounded as $p\downarrow1$.
Thus, we only have to look at the limit $p\uparrow D$.
It follows from the above expression that
\[C_p\leq C(D-p)^{-\frac{1}{p'}},\]
where $C$ does not depend on $p$.
Hence, taking into account that $\frac{1}{p'}=1-\frac1D-\frac{1}{p_*}$ and $D-p=pD/p_*$, we deduce
\[C_p\leq C_0p_*^{1-\frac1D-\frac1{p_*}}\leq C_0p_*^{1-\frac1D}.\]
Finally, it is easy to see that $C_1$ --- which is given by \eqref{bestcnt} --- can be bounded by a constant depending only on $D$, and therefore we can choose the constant $C_0$ to depend only on $D$.
\end{proof}

We can now give the:

\begin{proof}[Proof of Theorem \ref{trudinger}.] Let $u\in C^1_c(\Omega)$.
From Theorem \ref{sob} and Lemma \ref{estimacioCp} we deduce that
\[\int_\Omega x^A|u|^qdx\leq C_0^q q^{q-\frac{q}{D}}\left(\int_\Omega x^A|\nabla u|^{\frac{qD}{q+D}}dx\right)^{\frac{q+D}{D}}\]
for each $q>1$, where $C_0$ is a constant which depends only on $D$.
Moreover, by H\"older's inequality,
\[\int_\Omega x^A|\nabla u|^{\frac{qD}{q+D}}dx\leq \left(\int_\Omega x^Adx\right)^{\frac{D}{q+D}}\left(\int_\Omega x^A|\nabla u|^D dx\right)^{\frac{q}{q+D}},\]
and thus
\begin{equation}\label{cosatrud}
\int_\Omega x^A|u|^qdx\leq m(\Omega)C_0^q q^{q\frac{D-1}{D}}\|\nabla u\|_{L^D(\Omega,x^Adx)}^{q}.\end{equation}

Now, dividing the function $u$ by some constant if necessary, we can assume
\[\|\nabla u\|_{L^D(\Omega,x^Adx)}=1.\]
Let $c_1$ be a positive constant to be chosen later.
Then, using \eqref{cosatrud} with $q=\frac{kD}{D-1}$, $k=1,2,3,...$, we obtain
\begin{eqnarray} \int_\Omega \exp\left\{\left(c_1|u|\right)^{\frac{D}{D-1}}\right\}x^Adx \nonumber
&=&m(\Omega)+\sum_{k\geq1}\frac{c_1^{\frac{kD}{D-1}}}{k!} \int_\Omega |u|^{\frac{kD}{D-1}}x^Adx\\ \nonumber
&\leq&m(\Omega)+m(\Omega)\sum_{k\geq1}\frac{c_1^{\frac{kD}{D-1}}}{k!} (C_0)^{\frac{kD}{D-1}} \left(\frac{kD}{D-1}\right)^{k} \\
&= &m(\Omega)+m(\Omega)\sum_{k\geq1}\frac{k^k}{k!} \left(\frac{D}{D-1}(c_1C_0)^{\frac{D}{D-1}}\right)^{k}.\label{series}
\end{eqnarray}
Choose $c_1$ (depending only on $D$) satisfying $\frac{D}{D-1}(c_1C_0)^{\frac{D}{D-1}}<\frac{1}{e}$.
Then, by Stirling's formula
\[k!\sim \left(\frac ke\right)^k\sqrt{2\pi k},\]
we deduce that the series \eqref{series} is convergent, and thus
\[\int_\Omega \exp\left\{\left(\frac{c_1|u|}{\|\nabla u\|_{L^D(\Omega,x^Adx)}}\right)^{\frac{D}{D-1}}\right\}x^Adx \leq C_2m(\Omega),\]
as claimed.
Note that the constants $c_1$ and $C_2$ depend only on $D$.
\end{proof}

To end the paper, we give the

\begin{proof}[Proof of Corollary \ref{embeddings}.]
It follows from Theorems \ref{sob}, \ref{morrey}, and \ref{trudinger}.
For a domain $\Omega\subset\R^n$ that is not contained in $\R^n_*$, these results need to be applied to the intersections of $\Omega$ with each of the $2^k$ quadrants, where $k$ is the number of positive entries of the vector $A$ --- see the proof of \eqref{cormorrey} in Theorem \ref{morrey}.
\end{proof}


\begin{thebibliography}{99}


\bibitem{CSCM} X. Cabr\'e, \emph{Partial differential equations, geometry, and stochastic control} (in Catalan), Butl. Soc. Catalana Mat. 15 (2000), 7-27.

\bibitem{CDCDS} X. Cabr\'e, \emph{Elliptic PDEs in Probability and Geometry. Symmetry and regularity of solutions}, Discrete Contin. Dyn. Syst. 20 (2008), 425-457.

\bibitem{CR} X. Cabr\'e, X. Ros-Oton, \emph{Regularity of minimizers up to dimension 7 in domains of double revolution},
Comm. Partial Differential Equations, 38 (2013), 135-154.

\bibitem{CRS-CRAS} X. Cabr\'e, X. Ros-Oton, J. Serra, \emph{Euclidean balls solve some isoperimetric problems with nonradial weights},
C. R. Math. Acad. Sci. Paris 350 (2012), 945-947.

\bibitem{CRS} X. Cabr\'e, X. Ros-Oton, J. Serra, \emph{Sharp isoperimetric inequalities via the ABP method}, Preprint available at http://arxiv.org/abs/1304.1724.

\bibitem{CRS2} X. Cabr\'e, X. Ros-Oton, J. Serra, \emph{Regularity of solutions to elliptic equations with monomial weights}, in preparation.

\bibitem{CS} L. Caffarelli, L. Silvestre, \emph{An extension problem related to the fractional Laplacian}, Comm. Partial Differential Equations 32 (2007), 1245-1260.

\bibitem{C} A. Cavallucci, \emph{Alcuni teoremi di immersione per spazi con peso}, (in Italian) Ann. Mat. Pura Appl. 82 (1969), 143-172.

\bibitem{Ch} I. Chavel, \emph{Riemannian Geometry: A Modern Introduction}, 2nd Revised Edition, Cambridge University Press, Cambridge, 2006.

\bibitem{E} L. C. Evans, \emph{Partial Differential Equations}, Graduate Studies in Mathematics, 1998.

\bibitem{FKS} E. Fabes, C. Kenig, R. Serapioni, \emph{The local regularity of solutions of degenerate elliptic equations}, Comm. Partial Differential Equations 7 (1982), 77-116.

\bibitem{GT} D. Gilbarg, N. Trudinger, \emph{Elliptic Partial Differential Equations of Second Order}, Springer-Verlag, 1983.

\bibitem{G} A. Grigor'yan, \emph{Isoperimetric inequalities for Riemannian products}, Math. Notes 38 (1985), 849-854.

\bibitem{H} P. Hajlasz, \emph{Sobolev spaces on an arbitrary metric space}, Potential Anal. 5 (1996), 403-415.

\bibitem{IN} S. Ivanov, A. Nazarov, \emph{On weighted Sobolev embedding theorems for functions with symmetries},  St. Petersburg Math. J. 18 (2007), 77-88.

\bibitem{MS} C. Maderna, S. Salsa, \emph{Sharp estimates for solutions to a certain type of singular elliptic boundary value problems in two dimensions},  Applicable Anal. 12 (1981), 307-321.

\bibitem{M} F. Morgan, \emph{Manifolds with density}, Notices Amer. Math. Soc. 52 (2005), 853-858.

\bibitem{M2} F. Morgan, \emph{Isoperimetric estimates in products},  Ann. Global Anal. Geom. 30 (2006), 73-79.

\bibitem{T} G. Talenti, \emph{A Weighted Version of a Rearrangement Inequality}, Ann. Univ. Ferrara 43 (1997), 121-133.

\bibitem{T2} G. Talenti, \emph{Best constant in Sobolev inequality}, Ann. Mat. Pura Appl. 110 (1976), 353-372.

\bibitem{W} A. Weinstein, \emph{Generalized axially symmetric potential theory}, Bull. Amer. Math. Soc. 59 (1953), 20-38.


\end{thebibliography}
\end{document}